\numberwithin{equation}{section}
\newtcolorbox{leftbrace}{%
	enhanced jigsaw,
	breakable, 
	frame hidden, 
	overlay={%
		\draw [
		decoration={brace,amplitude=0.5em},
		decorate,
		ultra thick,
		]
		(frame.south west)--(frame.north west);
	},
	parbox=false,
}
\newcommand{\Hmm}[1]{\leavevmode{\marginpar{\tiny%
			$\hbox to 0mm{\hspace*{-0.5mm}$\leftarrow$\hss}%
			\vcenter{\vrule depth 0.1mm height 0.1mm width \the\marginparwidth}%
			\hbox to
			0mm{\hss$\rightarrow$\hspace*{-0.5mm}}$\\\relax\raggedright #1}}}
\newcommand{\hf}{\frac{1}{2}}
\newcommand{\N}{\mathbb{N}}
\newcommand{\R}{\mathbb{R}}
\newcommand{\curl}{{\bf curl}}
\newcommand{\e}{{\bf E}}
\newcommand{\h}{{\bf H}}
\newcommand{\J}{{\bf J}}
\newtheorem{theorem}{Theorem}[section]
\newtheorem{cor}[theorem]{Corollary}
\newtheorem{lemma}[theorem]{Lemma}
\newtheorem{defi}[theorem]{Definition}
\newtheorem{remark}[theorem]{Remark}
\newtheorem{rem}[theorem]{Remark}
\theoremstyle{definition}
\numberwithin{equation}{section}
\newcommand{\RN}[1]{%
	\textup{\uppercase\expandafter{\romannumeral#1}}%
}
\newcommand{\intr}[1]{\mathrm{int}(#1)}
\newcommand{\be}{\begin{equation}}
	\newcommand{\ee}{\end{equation}}
\newcommand{\bea}{\begin{eqnarray}}
	\newcommand{\eea}{\end{eqnarray}}
\newcommand{\bean}{\begin{eqnarray*}}
	\newcommand{\eean}{\end{eqnarray*}}
\newlength{\wex}  \newlength{\hex}
\newcommand{\ass}[1]{Let Assumptions~\ref{assump1} hold  in a bounded Lipschitz domain $\Gw$}
\def\Gw{\Omega}              
\def\Gwh{\Omega_h}
\begin{document}

	\title[4th Order Compact - Maxwell's]{Fourth Order Accurate Compact Scheme for First-Order Maxwell's Equations}
	\author {I. Versano}
	
	\address {School of Mathematical Sciences, Tel-Aviv University, Tel-Aviv 6997801, Israel}
	
	\email {idanversano@tauex.tau.ac.il}
	
	\author {E. Turkel}
	
	\address {School of Mathematical Sciences, Tel-Aviv University, Tel-Aviv 6997801, Israel}
	
	\email {turkel@tauex.tau.ac.il}
	
	\author {S. Tsynkov }
	
	\address {North Carolina State University, Box 8205, Raleigh, NC 27695, USA.}
	
	\email {tsynkov@math.ncsu.edu}


	\begin{abstract}

		We construct a compact fourth-order scheme, in space and time,  for the time-dependent Maxwell's equations given as
		a first-order system on a staggered (Yee) grid.
		At each time step, we update the fields by solving positive definite second-order  elliptic equations.
		We  face the challenge of  finding compatible boundary conditions for these elliptic equations while maintaining a compact stencil.
		The proposed scheme is compared   computationally  with a non-compact scheme and  data-driven approach.
		
		\medskip
		
		\noindent {\em Keywords:}  Compact finite differences, Maxwell's equations, High order accuracy.

	\end{abstract}
	\maketitle
	\begin{center}
		{\bf Acknowledgments}
	\end{center}
	
	The work supported by  the US--Israel Bi-national Science Foundation (BSF) under grant \#~2020128.

	\section{introduction}
	Time-dependent Maxwell's equations have been established for over 125 years as the
	theoretical representation of electromagnetic phenomena \cite{land2,land8}. However, the accuracy requirement of high-frequency simulations remains a challenge due to a pollution effect which has been discovered firstly for the convection equation \cite{kreiss} and later on for the Helmholtz equation \cite{bayliss, dera}.
	The pollution effect states that, for a $p$ order time accurate scheme, the quantity $\omega^{p+1}h_{\tau}^p$ should remain constant in order for the error to remain constant as the wave-number $\omega$ varies.
	A similar phenomenon applies also to spatial errors \cite{bayliss,dera}.
	This effect motivates  the need for higher order schemes, i.e., larger $p$.
	Using straightforward central differences, higher-order accuracy requires a larger stencil. This has two disadvantages. Firstly, the larger the stencil, the more work may be needed to invert a matrix with  a larger bandwidth and more non-zero entries. Even more serious are the difficulties near boundaries. A large stencil requires some modification near the boundaries where all the points needed in the stencil are not available. This raises questions about the efficiency and stability of such schemes.
	In addition, higher-order compact methods can achieve the same error while using fewer grid nodes, making them more efficient in terms of both storage and CPU time than non-compact or low-order methods (see e.g., \cite{britt_tsy_tur} for the wave equation, \cite{singer_turkel} for Helmholtz equation, \cite{yefet_turkel, yefet2} for Maxwell's equations, and references therein).
	The Yee scheme (also known as finite-difference time-domain, or FDTD) was introduced by Yee \cite{yee} and remains the common numerical method used for electromagnetic simulations \cite{taflove}.
	Although it is only a  second-order method, it is still preferred for many applications because it preserves important structural features of Maxwell's equations that other methods fail to capture.
	One of the main novelties presented by Yee was the staggered-grid approach, where	the electric field $\e$ and magnetic field $\h$ do not live at the same discrete space or time locations but at separate nodes on a staggered lattice.
	From the perspective of differential forms in space-time, it becomes clear that the staggered-grid approach is more faithful to the structure of Maxwell's equations
	that are dual to one another and hence $\e, \h$ naturally live on two staggered, dual meshes
	\cite{stern}.
	The question of staggered versus non-staggered high-order schemes has been studied in \cite{gottlieb} (see also \cite[pages 63-109]{taflove}), where it has been shown that, for a given order of accuracy, a staggered scheme is more accurate and efficient than a non-staggered scheme.

	In this paper, we present a new compact fourth-order accurate scheme, in both space and time, for Maxwell's equations in three dimensions using an equation-based method on a staggered grid.
	At each time step, we update the solution by solving uncoupled (positive definite) second-order elliptic equations using the conjugate gradient method. This procedure involves a non-trivial treatment at the boundaries on which boundary conditions are not given explicitly and have to be deduced by the equation itself while maintaining the compact stencil.
	While the development of the scheme is done in 3D, the simulations are only two-dimensional for the sake of simplicity.
	\subsection{Maxwell's equations}
	
	Let $\e,\h,\bf{D},\bf{B},\bf{J}$ be (real-valued) vector fields, defined for   $(t,{\bf x})\in [0,\infty)\times\Omega$, which take values in $\R^3$.
	The Maxwell equations, in first-order differential vector form, are given by
	\begin{align}
		\label{eq"maxwell}
		\frac{\partial \bf{D}}{\partial t} &= \nabla \!\times\! \bf{H} - \bf{J}, &
		\frac{\partial \bf{B}}{\partial t}&=-\nabla \times \e  \notag \\
		\nabla \cdot \bf{H} &= 0  & \nabla \cdot \e &=\frac{ \rho}{\varepsilon} \\
		\bf{B} &= \mu \bf{H} &  \bf{D} &= \epsilon \e \notag
	\end{align}
	in $\Gw$, where in addition
	$$
	\qquad \frac{\partial \rho}{\partial t} + \nabla \cdot \bf{J} = 0.
	$$
	Here $\nabla \times $ denotes the curl operator with respect to ${\bf x}$.
	We assume that smooth initial conditions are given for $\bf{D}$ and $\bf{B}$ at $t=0$.
	We assume the following:
	\begin{enumerate}
		\item  $\mu,\varepsilon$ are positive constants, unless noted otherwise, satisfying $\frac{1}{\mu \varepsilon}=c^2$.
		\item  $\Omega=[0,1]^3$, $\vec{n} \times {\bf E}=0$ on $\partial \Omega$ (\cite[Section 8]{rolf}).
	\end{enumerate}
	The identity $\nabla \cdot (\nabla \times)=0$ implies that
	$\frac{\partial}{\partial t}\left (\nabla \cdot \h\right)=0$ and hence the divergence conditions are only restrictions on the initial conditions.
	By letting  $ \tau=ct$,  $Z=\sqrt{\mu/\varepsilon}$, we obtain the following re-scaled equations:
	\begin{subequations}
		\label{eq:maxwellv}
		\begin{align}
			\frac{\partial {\bf E}}{\partial \tau}=Z(\nabla \times \bf{H}-{\bf J}) \quad \mathrm{in} \quad  (0,\infty)\times \Omega & \label{maxe} \\
			\frac{\partial {\bf H}}{\partial \tau}=-\frac{1}{Z}\nabla \times \e \quad \mathrm{in} \quad  (0,\infty)\times \Omega & \label{maxh} \\
			\nabla \cdot \h(\tau,x,y,z) = 0 & \label{maxdivh} \\
			\nabla \cdot \e(\tau,x,y,z) =\frac{ \rho}{\varepsilon}  & \label{maxdive} \\
			\frac{1}{c}\frac{\partial \rho}{\partial \tau} + \nabla \cdot \bf{J} = 0. & \label{maxrho}
		\end{align}
	\end{subequations}
	
	This is supplemented by the initial conditions
	$$
	\e(0,x,y,z)=\e_0, \quad
	\h(0,x,y,z)=\h_0
	$$
	and the boundary conditions $\vec{n}\times {\bf E}=0$ on $\partial \Gw$.
	
	\begin{rem}
		\label{rem:wave_e}
		Differentiating \eqref{eq:maxwellv} with respect to $\tau$, we see  that $\e$ and $\h$ satisfy the wave equation
		\begin{align} \label{secondorder}
			\frac{\partial^2 \e}{\partial \tau^2}&=-Z\frac{\partial \J}{\partial \tau}-\nabla\times \nabla\times \e\\ \notag
			&=-Z\frac{\partial \J}{\partial \tau}-\nabla(\nabla \cdot \e)+\Delta \e\\
			&=-Z\frac{\partial \J}{\partial \tau}-\nabla(\nabla \cdot \e)+\Delta \e \notag
		\end{align}
		and
		\begin{equation}
			\label{secondorderh}
			\frac{ \partial^2 \h}{\partial \tau^2}=-\nabla \times \nabla \times\h-\nabla \times \partial_{\tau}\J=
			\Delta \h-\nabla \times \partial_{\tau}\J.
		\end{equation}
		Here $\Delta$ denotes the Laplacian with respect to ${\bf x}$.
	\end{rem}
	To determine the number of imposed boundary conditions, we need to examine the eigenvalues for the reduced one-dimensional
	equations at that boundary.  A positive eigenvalue indicates a variable that is entering from outside the domain
	and so needs to be specified. A negative eigenvalue indicates the variable is determined from the inside and so
	cannot be imposed and instead is determined by the interior equations. A zero eigenvalue is more ambiguous.
	\begin{remark}\label{rem:Neumann}
		An imposed boundary condition must contain only derivatives of a lower order than the differential equation.
		Thus, for a first-order system, all boundary conditions must be of Dirichlet type. A Neumann condition is
		\textbf{not} considered an imposed boundary condition.
	\end{remark}
	To derive the boundary  conditions for $\e$ and $\h$, we analyze the  characteristics of the homogeneous counterpart to system \eqref{eq"maxwell}, $\rho=0$ and $\J=0$, in the direction normal to the boundary (cf.\ \cite[Section 9.1]{GKO}).
	The two divergence equations  can be considered as constraints on the initial data and  do not affect the characteristics.
	Let $w$ be the vector $(E_x,E_y,E_z,H_x,H_y,H_z)^{t}$. Then, considering only the $x$ space derivatives (without loss of generality) we arrive at
	\begin{equation*}
		\frac{\partial w}{\partial t} = A w_x
	\end{equation*}
	where
	\begin{equation*}
		A =
		\begin{pmatrix}
			0             & 0              & 0 & 0 & 0                   & 0 \\
			0             & 0              & 0 & 0 & 0                   & \frac{1}{\epsilon} \\
			0             & 0              & 0 & 0 & -\frac{1}{\epsilon} & 0 \\
			0             & 0              & 0 & 0 & 0                   & 0 \\
			\frac{1}{\mu} & 0              & 0 & 0 & 0                   & 0 \\
			0             & -\frac{1}{\mu} & 0 & 0 & 0                   & 0
		\end{pmatrix}
	\end{equation*}
	The eigenvalues of $A$ are $ c \!=\pm\! \frac{1}{\epsilon \mu} $ (twice) and $0$ (twice). Thus, two boundary conditions need to be imposed,
	two are determined from the interior and two are not clear.
	We shall see that we impose ${\bf E}_{\parallel} = 0$ (two conditions) and ${\bf H}_{\perp} = 0$ (one condition).

	The structure of the paper is as follows:
	In Section \ref{sec:prelimi}  we introduce the background.
	In Section \ref{sec:the scheme}  we present our scheme, and Section \ref{sec:stability} is devoted to stability analysis. Section \ref{sec:TE} is devoted to numerical examples.

	\section{preliminaries}\label{sec:prelimi}
	We consider a uniform discretization of $\Gw=[0,1]^3$, $\Delta x \!=\! \Delta y \!=\! \Delta z \!=\! h$
	and $h_{\tau} $ is the time-step.
	We introduce the following notations:
	\begin{itemize}
		
		\item ${\bf x}=(x,y,z)\in \R^3$.\\[1mm]
		\item $r=\frac{h_{\tau}}{h}$ is the Courant-Friedrichs-Lewy (CFL) number. \\[1mm]
		\item $\e(t,{\bf x})=(E_x(t,{\bf x}),E_y(t,{\bf x}),E_z(t,{\bf x}))$, $\h(t,{\bf x})=(H_x(t,{\bf x}),H_y(t,{\bf x}),H_z(t,{\bf x}))$.\\[1mm]
		\item  $\Delta$ denotes the Laplacian with respect to ${\bf x}$. \\[1mm]
		\item $\Delta \e=(\Delta E_x, \Delta E_y, \Delta E_z)$,
		$\Delta \h=(\Delta H_x, \Delta  H_y, \Delta  H_z)$.
		\\[1mm]
		\item $\e^n$  (resp. $\h^{n+1/2}$) denotes $\e(t=nh_{\tau})$ (resp. $\h(t=(n+1/2)h_{\tau})$).\\[1mm]
		\item For $i,j,k\in \N \cup \{0\}$, $x_{i}=i h,y_{j}=j h,z_{k}=k h$. \\[1mm]
		\item $x_{i+\frac{1}{2}}=\left (i+\frac{1}{2}\right ) h, y_{j+\frac{1}{2}}=\left (j+\frac{1}{2}\right ) h, z_{k+\frac{1}{2}}=\left (k+\frac{1}{2}\right ) h$. \\[1mm]
		\item 	$[N]=\{0,1,..,N\},\quad N\in \N.$\\[1mm]
		\item 	$[k,m]=\{k,k+1,..m\},\quad k<m\in \N.$\\[1mm]
		\item {\bf 1} denotes the identity operator. \\[1mm]
		\item For any  operator  $T:\R^M\to \R^N$ , $\|T\|:=\sup\limits_{0\neq x\in \R^N}\frac{|Tx|}{|x|}$.
		\item  $\partial_{\tau}:=\frac{\partial}{\partial \tau}$ and $\partial_{\tau}^2:=\frac{\partial^2}{\partial^2 \tau}$.\\[1mm]
		\item $\delta_{\tau} U^{n+\frac{1}{2}}:=\frac{U^{n+1}-U^{n}}{h_{\tau}}$.\\[1mm]
		
	\end{itemize}
	To discretize the equations, we introduce a staggered mesh in both space and time as in the Yee scheme \cite{yee}.
	With this arrangement, all space derivatives are spread over a single mesh width, and the central time and space derivatives
	are centered at the same point, similar to that of the Yee scheme.
	$\e$ is  evaluated at time $nh_{\tau}$ while $\bf{H}$ and $\bf{J}$ are evaluated at time $(n\!+\! \frac{1}{2})h_{\tau}$.
	For the spatial discretization, we define the following meshes:
	\begin{align}\label{eq:meshes}
		&\Gwh^{E_z}:=\{(x_i,y_j,z_{k+\frac{1}{2}}),\quad (i,j,k)\in [N]^2 \times[N-1]\} \\ \nonumber
		& \Gwh^{E_y}:=\{(x_i,y_{j+\frac{1}{2}},z_{k}), \quad (i,j,k)\in [N]\times [N-1] \times[N] \}\\ \nonumber
		& \Gwh^{E_x}:=\{ (x_{i+\frac{1}{2}},y_{j},z_{k}),\quad (i,j,k) \in [N-1]\times [N]^2 \} \\ \nonumber
		&\Gwh^{H_z}:=\{ (x_{i+\frac{1}{2}},y_{j+\frac{1}{2}},z_{k}),\quad (i,j,k)\in [N-1]^2\times[N] \}\\
		&\Gwh^{H_y}:=\{ (x_{i+\frac{1}{2}},y_j,z_{k+\frac{1}{2}}),\quad (i,j,k) \in [N-1]\times [N] \times [N-1] \}\\ \nonumber
		& \Gwh^{H_x}:=\{ (x_i,y_{j+\frac{1}{2}},z_{k+\frac{1}{2}}), \quad (i,j,k) \in [N]\times [N-1]^2\}.\nonumber
	\end{align}
	For each $s=x,y,z$ we define the interior meshes:
	$$\mathrm{int}(\Gwh^{E_s}):=(0,1)^3\cap\Gwh^{E_s}, \qquad
	\mathrm{int}(\Gwh^{H_s}):=(0,1)^3\cap\Gwh^{H_s}.
	$$
	\begin{figure}[!t]
		\centering
		\includegraphics[scale=0.7]{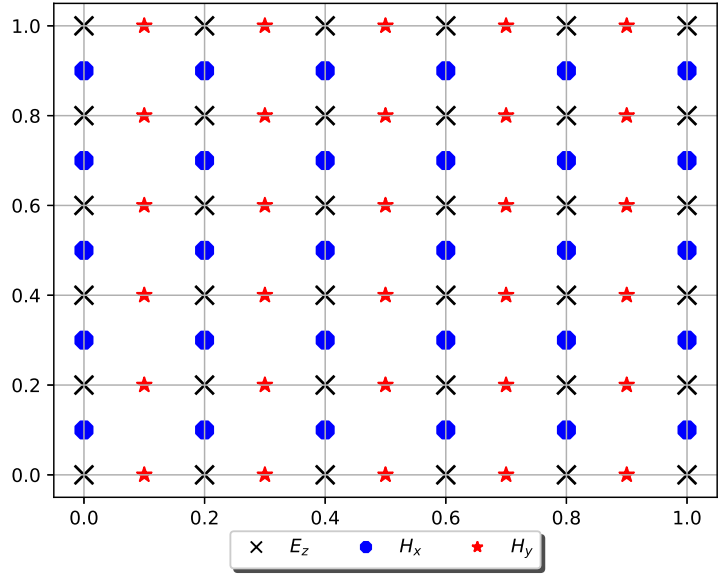}
		\caption{The grid points of $\Gwh^{E_z}, \Gwh^{H_x}, \Gwh^{H_y}$ projected on the $(x,y)$ plane in the case $N=5$.}
		\label{fig:omega}
	\end{figure}
	By convention, $x_0=y_0=z_0=0$ and $x_N=y_N=z_N=1$.
	We denote
	$$
	\Gwh^{{\bf E}}:=\Gwh^{E_x}\times \Gwh^{E_y}\times \Gwh^{E_z}, \qquad
	\Gwh^{\bf H}:=
	\Gwh^{H_x}\times \Gwh^{H_y}\times \Gwh^{H_z},
	$$
	and
	$$
	\mathrm{int}(\Gwh^{{\e}}):=\mathrm{int}(\Gwh^{E_x})\times \mathrm{int}(\Gwh^{E_y})\times \mathrm{int}(\Gwh^{E_z}),
	\quad \partial \Gwh^\e:=\Gwh^\e\setminus  \mathrm{int}(\Gwh^{\e})
	$$
	$$
	\mathrm{int}(\Gwh^{\h}):=
	\mathrm{int}(\Gwh^{H_x})\times \mathrm{int}(\Gw^{H_y})\times \mathrm{int}(\Gwh^{H_z}),
	\quad \partial \Gwh^\h:=\Gwh^\h\setminus  \mathrm{int}(\Gw^{\h})
	$$
	see Figure \ref{fig:omega} for illustration of the sets
	$\Gwh^{E_z}, \Gwh^{H_x},\Gwh^{H_y}$ projected on the $x,y$ plane.
	The discretized functions $\e_h, \h_h$ are then defined as:
	$$
	\e_h^n:=\e(nh_{\tau},\Gwh^\e),\quad  \h_h^{n+\frac{1}{2}}:=\h\left ((n+\frac{1}{2})h_{\tau},\Gwh^\h\right).
	$$
	With this arrangement, the boundary condition
	$\vec{n}\times {\bf E}=0$ on $\partial \Gwh^{{\bf E}}$ implies that
	\begin{align}\label{eq:bc_E=0}
		&\nonumber
		E_x(x_{i+\frac{1}{2}},0,z_j)=E_x(x_{i+\frac{1}{2}},N,z_j)=
		E_x(x_{i+\frac{1}{2}},y_j,0)=E_x(x_{i+\frac{1}{2}},y_j, N)=0, \\&
		E_y(0,y_{j+\frac{1}{2}},z_j)=E_y(N,y_{j+\frac{1}{2}},z_j)=
		E_y(x_i,y_{j+\frac{1}{2}},0)=E_y(x_i, y_{j+\frac{1}{2}}, N)=0, \\&
		E_z(0,y_j,z_{k+\frac{1}{2}})=E_x(N,y_j,z_{k+\frac{1}{2}})=
		E_z(x_i,0,z_{k+\frac{1}{2}})=E_z(x_i,N,z_{k+\frac{1}{2}})=0. &\nonumber
	\end{align}
	In short, $E_x\!=\!0$ when $y\!=\!0,1$ and $z\!=\!0,1$; $E_y\!=\!0$ when $x\!=\!0,1$ and $z\!=\!0,1$;$E_z\!=\!0$ when $x\!=\!0,1$ and $y\!=\!0,1$.
	
	\subsection{Boundary conditions for time derivatives}
	
	We will now derive  the boundary conditions for the time derivatives of the fields $\e$ and $\h$ rather than the fields themselves.
	We do so since time marching with our compact scheme derived in Section~\ref{sec:the scheme} involves solving elliptic equations for the time derivatives of the fields (see equation \eqref{eq:mhe} and \eqref{eq:mhh}).
	
	By \eqref{eq:bc_E=0}, we have the following Dirichlet-type boundary conditions:
	\begin{equation}
		\label{eq:BC_E}
		\begin{gathered}
			\partial_\tau E_x\!=\!0 \ \text{if} \ y\!=\!0,1 \ \text{and} \ z\!=\!0,1,\\
			\partial_\tau E_y\!=\!0 \ \text{if} \ x\!=\!0,1 \ \text{and} \ z\!=\!0,1,\\
			\partial_\tau E_z\!=\!0 \ \text{if} \ x\!=\!0,1 \ \text{and} \ y\!=\!0,1.
		\end{gathered}
	\end{equation}
	Moreover, differentiating the Gauss law of electricity $\nabla \cdot \e=\frac{\rho}{\varepsilon}$ with respect to time and using \eqref{eq:bc_E=0} we see that,
	for the remaining variables the Neumann-type conditions hold:
	\begin{gather*}
		\partial_x\partial_{\tau}E_x= \dfrac{\partial_\tau\rho}{\varepsilon} \ \text{if} \ x=0,1,\\
		\partial_y\partial_{\tau}E_y= \dfrac{\partial_\tau\rho}{\varepsilon} \ \text{if} \ y=0,1,\\
		\partial_z\partial_{\tau}E_z= \dfrac{\partial_\tau\rho}{\varepsilon} \ \text{if} \ z=0,1.
	\end{gather*}
	The Faraday law  $\partial_{\tau}\h=-\frac{1}{Z}\nabla \times \e $ yields the following Dirichlet-type boundary conditions:
	\begin{equation}
		\label{eq:BC_H}
		\begin{gathered}
			\partial_\tau H_x=0 \ \text{if} \ x=0,1,\\
			\partial_\tau H_y=0 \ \text{if} \ y=0,1,\\
			\partial_\tau H_z=0 \ \text{if} \ z=0,1.
		\end{gathered}
	\end{equation}
	In Table~\ref{tab:bc_terms}, we provide the values of the second-order derivatives at the boundaries of $\Omega$ that we need for subsequent analysis and that can be derived from equations (\ref{eq:BC_E}) and (\ref{eq:BC_H}) by differentiation.
	
	\begin{table}[ht!]
		\centering
		\begin{tabular}{|p{2cm}|p{2cm}|p{2cm}|p{2cm}|}
			\hline
			&$x=0,1$&$y=0,1$ & $z=0,1$	\\ [0.5ex]
			\hline
			$\partial_{\tau}^2E_x$ &-& 0& 0 \\[1mm]
			$\partial_{\tau}^2E_y$& 0 &- & 0 \\[1mm]
			$\partial_{\tau}^2E_z$&0& 0& - \\[1mm]
			$\partial_{\tau}\partial_yH_z$&-&-&0 \\[1mm]
			$\partial_{\tau}\partial_zH_y$ & - & 0 & - \\[1mm]
			$\partial_{\tau}\partial_zH_x$&0& -& - \\[1mm]
			$\partial_{\tau}\partial_xH_z$&-&-&0 \\[1mm]
			$\partial_{\tau}\partial_xH_y$&-& 0& - \\[1mm]
			$\partial_{\tau}\partial_yH_x$&0 & -& - \\[1mm]
			\hline
		\end{tabular}
		\caption{Second-order derivatives in time and mixed space-time derivatives at the boundaries of $\Omega$. The first three rows follow from (\ref{eq:BC_E}). The other six rows are an implication of (\ref{eq:BC_H}).}
		\label{tab:bc_terms}
	\end{table}

	The Amp\`ere law $\partial_{\tau}\e=Z(\nabla \times \h-\J)$ implies that
	\begin{align*}
		&
		\partial_\tau^2E_x=Z\left(
		\frac{\partial H_z}{\partial_y\partial_\tau}-\frac{\partial^2 H_y}{\partial_z\partial_\tau}-\partial_\tau \J_x
		\right),    \\&
		\partial_\tau^2E_y=Z\left(
		\frac{\partial H_x}{\partial_z\partial_\tau}-\frac{\partial^2 H_z}{\partial_x\partial_\tau}-\partial_\tau \J_y
		\right), \\&
		\partial_\tau^2E_z=Z\left(
		\frac{\partial H_y}{\partial_x\partial_\tau}-\frac{\partial^2 H_x}{\partial_y\partial_\tau}-\partial_\tau \J_z
		\right).
	\end{align*}
	Substituting the values from  Table \ref{tab:bc_terms}
	into these  equations, we obtain  the following six Neumann-type boundary conditions:
	\begin{gather*}
		\partial_y\partial_\tau H_x=-\partial_\tau \J_z \ \text{if} \ y=0,1 \quad
		\partial_z\partial_\tau H_x=\partial_\tau \J_y \ \text{if} \ z=0,1,\\
		\partial_x\partial_\tau H_y=\partial_\tau \J_z \ \text{if} \ x=0,1 \quad
		\partial_z\partial_\tau H_y=-\partial_\tau \J_x \ \text{if} \ z=0,1,\\
		\partial_x\partial_\tau H_z=-\partial_\tau \J_y \ \text{if} \ x=0,1 \quad
		\partial_y\partial_\tau H_z=\partial_\tau \J_x \ \text{if} \ y=0,1.
	\end{gather*}
	
	With these arrangements, the derivatives $\partial_{\tau}\e$ and $\partial_{\tau}\h$
	are supplied with well-defined boundary conditions, as summarized in Table \ref{tab:bc}.
	In short, on $\partial \Gw$ we have   $\partial_{\tau}{\bf E}_{\parallel} = \partial_{\tau}{\bf H}_{\perp} = 0$, while
	$\partial_{\tau}{\bf E}_{\perp}$ and $\partial_{\tau}{\bf H}_{\parallel}$ obey a Neumann condition.
	As discussed earlier, Dirichlet conditions are true boundary conditions while Neumann conditions
	are derived from the interior equations.
	
	\begin{table} [ht!]
		\centering
		\begin{tabular}{|p{2cm}|p{2cm}|p{2cm}|p{2cm}|}
			\hline
			&$x=0,1$&$y=0,1$ & $z=0,1$	\\ [0.5ex]
			\hline
			$\partial_{\tau}E_x$&Neumann&Dirichlet&Dirichlet \\
			$\partial_{\tau}E_y$&Dirichlet &Neumann&Dirichlet \\
			$\partial_{\tau}E_z$&Dirichlet&Dirichlet&Neumann \\
			$\partial_{\tau}H_x$&Dirichlet&Neumann&Neumann \\
			$\partial_{\tau}H_y$&Neumann&Dirichlet&Neumann \\
			$\partial_{\tau}H_z$&Neumann&Neumann&Dirichlet \\
			\hline
		\end{tabular}
		\caption{Boundary conditions for the time derivatives of the Cartesian components of  $\e$ and $\h$.}
		\label{tab:bc}
	\end{table}
	
	\section{The scheme}\label{sec:the scheme}
	\subsection{Equation based derivation}
	
	We now extend the second-order accurate Yee scheme to a fourth-order accurate scheme, in both space and time, while maintaining the compact Yee stencil.
	The idea is to use a Taylor  expansion in time to the next order and then replace the resulting third-order time derivatives by space derivatives using
	Maxwell's equations.
	
	The fourth-order Taylor expansion applied to \eqref{eq:maxwellv} and combined with equations \eqref{secondorder} and \eqref{secondorderh} yields:
	{\allowdisplaybreaks
		\begin{align*}
			&
			Z(\nabla \times \h^{n+1/2}-\J^{n+1/2})=	\left (  \frac{\partial \e}{\partial \tau} \right)^{n+1/2}=
			\frac{\e^{n+1} -\e^n}{h_{\tau}}-\frac{h_{\tau}^2}{24}\frac{\partial^3 \e^{n+1/2}}{\partial \tau^3}+O(h_{\tau}^4)=\\&
			\delta_{\tau}\e^{n+1/2}-\frac{h_{\tau}^2}{24}\partial_{\tau}
			\left(
			-Z\partial_\tau \J^{n+1/2}-\nabla(\nabla \cdot \e^{n+1/2})+\Delta \e^{n+1/2}
			\right)+O(h_{\tau}^4)=\\&
			\delta_{\tau}\e^{n+1/2}-\frac{h_{\tau}^2}{24}
			\left(
			-Z\partial_{\tau}^2\J^{n+1/2}
			-\nabla(\nabla\cdot\partial_\tau \e^{n+1/2})+\Delta \partial_{\tau}\e^{n+1/2}
			\right)+O(h_{\tau}^4)=\\&
			\delta_{\tau}\e^{n+1/2}-\frac{h_{\tau}^2}{24}
			\left(
			-Z\partial_{\tau}^2\J^{n+1/2}
			+Z\nabla(\nabla\cdot\J^{n+1/2})+\Delta \partial_{\tau}\e^{n+1/2}
			\right)+O(h_{\tau}^4).
		\end{align*}
	}%
	Since  $\Delta \partial_{\tau}\e^{n+1/2}=\Delta\delta_{\tau} \e^{n+1/2}+O(h_{\tau}^2) $,
	we have that
	\begin{align*}
		&
		Z(\nabla \times \h^{n+1/2}-\J^{n+1/2})=\\&
		\delta_{\tau}\e^{n+1/2}-\frac{h_{\tau}^2}{24}
		\left(
		-Z\partial_{\tau}^2\J^{n+1/2}
		+Z\nabla(\nabla\cdot\J^{n+1/2})+\Delta \delta_{\tau}\e^{n+1/2}
		\right)+O(h_{\tau}^4).
	\end{align*}
	Similarly, we have for $\h$:
	\begin{align*}
		&
		-\frac{1}{Z}\nabla \times \e^{n+1}=\frac{\partial \h^{n+1}}{\partial \tau}=\\&
		\delta_\tau \h^{n+1}-\frac{h_{\tau}^2}{24}\partial_{\tau}\left (
		\Delta \h^{n+1}-\nabla \times \partial_{\tau}\J^{n+1}
		\right)+O(h_{\tau}^4)=\\&
		\delta_\tau \h^{n+1}-\frac{h_{\tau}^2}{24}\left (
		\delta_{\tau}\Delta\h^{n+1}-\nabla \times \partial_{\tau}^2\J^{n+1}
		\right)+O(h_{\tau}^4).
	\end{align*}
	Thus, at each time step we obtain several uncoupled modified Helmholtz equations which are positive definite.
	Note, that for more complicated situations the equations might be coupled at the boundary.
	\begin{subequations}
		\label{eq:mh}
		\begin{align}
			-\Delta\delta_{\tau} \e^{n+\frac{1}{2}}+\frac{24}{h_{\tau}^2}\delta_{\tau}\e^{n+\frac{1}{2}} &=
			Z\frac{24}{h_{\tau}^2}\nabla\times \h^{n+\frac{1}{2}}+P({\bf J})^{n+\frac{1}{2}}+
			O(h_{\tau}^4+h^4) \quad \mathrm{in} \quad (0,1)^3, \label{eq:mhe} \\
			-\Delta \delta_{\tau}\h^{n+1}+\frac{24}{h_{\tau}^2}\delta_{\tau}\h^{n+1} &=
			-\frac{1}{Z}\frac{24}{h_{\tau}^2}\nabla\times \e^{n+1}
			-\nabla\times\partial_{\tau}^2\J^{n+1}+
			O(h_{\tau}^4+h^4)
			\quad \mathrm {in} \quad (0,1)^3,\label{eq:mhh}
		\end{align}
	\end{subequations}
	where
	\begin{equation}
		\label{eq:pj}
		P({\bf J})^{n+\frac{1}{2}}\stackrel{\text{def}}{=}Z\left ( -{\bf J}^{n+\frac{1}{2}}-\nabla (\nabla \cdot{\bf J}^{n+\frac{1}{2}})+\frac{\partial^2{\bf J}^{n+\frac{1}{2}}}{\partial \tau^2}\right ).
	\end{equation}
	Equations (\ref{eq:mh}) are supplemented by the boundary conditions for the functions $\delta_{\tau}\e$ and  $\delta_{\tau}\h$, as discussed in Section~\ref{sec:prelimi}.

	\subsection{Modified Helmholtz equation}\label{subsec:mh}

	In a  Cartesian coordinate system, each of the equations \eqref{eq:mhe} and \eqref{eq:mhh} gets split into three scalar modified Helmholtz equations:
	\begin{equation}\label{helmholtz}
		-\Delta \phi+\kappa^2\phi=\kappa^2F, \ \kappa^2>0.
	\end{equation}
	We use the  fourth-order accurate compact finite-difference scheme \cite{singer_turkel} for solving  elliptic equations of the type (\ref{helmholtz}).
	
	Consider an equally-spaced mesh of dimension $N_x\times N_y\times N_z$ and size $h$ in each direction.
	We denote by $D_{xx}$, $D_{yy}$, and $D_{zz}$  the standard  second-order central-difference operators  and define
	$$
	\Delta_h\stackrel{\text{def}}{=}D_{xx}+D_{yy}+D_{zz},
	\quad \varUpsilon_h\stackrel{\text{def}}{=} D_{zz}D_{xx}+D_{yy}D_{zz}+D_{xx}D_{yy}.
	$$
	Then, the fourth-order accurate scheme for \eqref{helmholtz} is given by \cite{singer_turkel}:
	\begin{equation}\label{eq:ell}
		-\left (\Delta_h+\frac{h^2}{6}\varUpsilon_h\right)
		\phi+\kappa^2\left (1+\frac{\kappa^2h^2}{12}\right)\phi=
		\kappa^2\left (1+\frac{\kappa^2h^2}{12}+\frac{h^2}{12}\Delta_h \right)F.
	\end{equation}
	If $\Delta F$ is  known at the grid nodes to fourth-order accuracy, then
	\eqref{helmholtz} can be simplified further to
	\begin{equation}\label{eq:helm2}
		-\left (\Delta_h+\frac{h^2}{6}\varUpsilon_h\right)
		\phi+\kappa^2\left (1+\frac{\kappa^2h^2}{12}\right)\phi=
		\kappa^2\left (1+\frac{\kappa^2h^2}{12} \right)F+
		\frac{\kappa^2h^2}{12}\Delta F.
	\end{equation}
	
	Letting $\kappa^2 \!=\! \dfrac{24}{h_{\tau}^2}$ in  \eqref{eq:mh} and \eqref{eq:ell} gives rise to  six elliptic equations of type (\ref{helmholtz}) with $\phi$ and $F$ specified in  Table \ref{tab:spec}. Once the discretization in space has been applied (Section~\ref{sec:discrete}), equations \eqref{eq:mh} lead to six positive definite linear systems to be solved independently by  conjugate gradients.
	Note that, it is difficult to use multigrid since the grid dimension for different various equations is not the same and hence not all equations can have $2^n$ grid nodes in every direction.

	\begin{table} [ht!]
		\centering
		\begin{tabular}{|p{0.5cm}|p{2cm}|p{8cm}|}
			\hline
			Eq.  &$\phi$&F	\\ [0.5ex]
			\hline
			\label{tab:6eq}
			1&$\delta_{\tau}E_x^{n+\hf}$&
			$Z(\partial_y H_z^{n+\hf}-\partial_z H_y^{n+\hf})+\frac{h_{\tau}^2}{24}P(\J^{n+\hf})_x$
			\\
			2&$\delta_{\tau}E_y^{n+\hf}$&
			$Z(\partial_z H_x^{n+\hf}-\partial_x H_z^{n+\hf})+\frac{h_{\tau}^2}{24}P(\J^{n+\hf})_y$
			\\
			3&$\delta_{\tau}E_z^{n+\hf}$&
			$Z(\partial_x H_y^{n+\hf}-\partial_y H_x^{n+\hf})+\frac{h_{\tau}^2}{24}P(\J^{n+\hf})_z$
			\\
			4&$\delta_{\tau}H_x^{n+\frac{3}{2}}$&
			$-\frac{1}{Z}(\partial_y E_z^{n+1}-\partial_z E_y^{n+1})-\frac{h_{\tau}^2}{24}(\nabla \times \partial_{\tau}^2\J^{n+1})_x$
			\\
			5&$\delta_{\tau}H_y^{n+\frac{3}{2}}$&
			$-\frac{1}{Z}(\partial_z E_x^{n+1}-\partial_x E_z^{n+1})-\frac{h_{\tau}^2}{24}(\nabla \times \partial_{\tau}^2\J^{n+1})_y$
			\\
			6&$\delta_{\tau}H_z^{n+\frac{3}{2}}$&
			$-\frac{1}{Z}(\partial_x E_y^{n+1}-\partial_y E_x^{n+1})-\frac{h_{\tau}^2}{24}(\nabla \times \partial_{\tau}^2\J^{n+1})_z$
			\\
			\hline
		\end{tabular}
		\caption{The expressions for  $\phi$ and $F$ in (\ref{helmholtz})
			($P(\J)$ is given by \eqref{eq:pj}).}
		\label{tab:spec}
	\end{table}

	\subsection{Neumann boundary conditions}
	
	We reemphasize that Neumann conditions for a first-order system are not imposed boundary conditions. They are rather an implication of the PDEs in the interior.
	Let $\Gw=[0,1]^3$.
	We define ghost nodes outside the numerical grid so
	that the Neumann conditions are satisfied to fourth-order accuracy (see Figure \ref{fig:ghost}).

	\begin{figure}[ht!]
		\centering
			\includegraphics[scale=1]{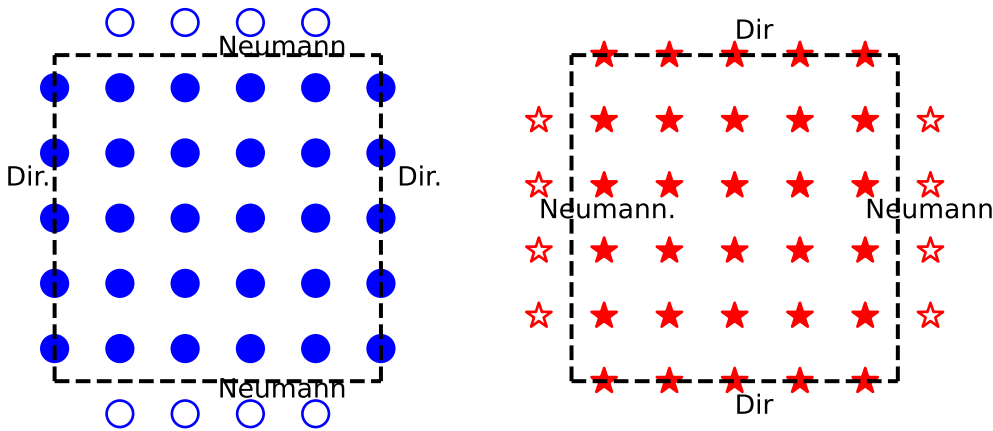}
			\caption{
				Left: The grid nodes for $H_x$ projected onto the $(x,y)$ plane in the case $N_x=N_y=5$. The unit square is bounded by the dashed line. Solid circles show the mesh points of $\Gw_h^{H_x}$. Hollow circles denote the ghost points induced by the Neumann boundary conditions at $y=0,1$.
				Right: The grid nodes for $H_y$ projected onto the $(x,y)$ plane in the case $N_x=N_y=5$. The unit square is bounded by the dashed frame. Solid stars show the mesh points of $\Gw_h^{H_y}$. Hollow stars denote ghost points induced by the Neumann boundary conditions at $x=0,1$.}
			\label{fig:ghost}
		\end{figure}

		We propose two methods for the implementation of the Neumann boundary conditions.
		\paragraph{\em Method 1 (equation based)}
		Consider, without loss of generality, equation \eqref{helmholtz} with the Neumann boundary condition $\partial_x \phi(0,y,z)=g(y,z)$.
		Differentiating \eqref{helmholtz} with respect to $x$ (cf. \cite{singer_turkel}), we have:
		\begin{equation}\label{eq:gst}
			\phi_x(0,x,z)=\frac{\phi(\hf h,y,z)-\phi(-\hf h,y,z)}{h}+\frac{h^2}{24}
			\left(\phi_{xyy}+\phi_{xzz}-\kappa^2\phi_x-\kappa^2F_x \right)+O(h^4).
		\end{equation}
		Assume that $\phi_{xyy}+\phi_{xzz}$ is known for $x=0$.
		Setting $x\!=\!0$ in \eqref{eq:gst}, we define the ghost value $\phi(-\hf h,y,z)$ such that the Neumann boundary condition $\partial_x \phi\!=\!=g(y,z)$ is satisfied to fourth-order.
		\\[2mm]
		\paragraph{\em Method 2 (Taylor based)}
		By Taylor's expansion
		\begin{equation}\label{eq:gst2}
			\phi_x(0,y,z)=\frac{\phi(\hf h,y,z)-\phi(-\hf h,y,z)}{h}-\frac{h^2}{24}\partial_{xxx}\phi(0,y,z)+O(h^4).
		\end{equation}
		Hence, knowing $\partial_x\phi$ and $\partial_{xxx}\phi$ at $x\!=\!0$ enables one to obtain the ghost variable $\phi(-\hf h,y,z)$ using \eqref{eq:gst2}.
		
		Next, we  show how to approximate the Neumann-type boundary conditions to fourth-order accuracy for $\partial_{\tau}\e$ using Method 1 and
		for $\partial_{\tau}\h$ using Method 2.
		\begin{lemma}
			Let $\phi(x,y,z)=\partial_\tau E_x$ be evaluated at some fixed $\tau_0$.
			Then, the quantities $$\phi_{xyy}(0,y,z),\  \phi_{xzz}(0,y,z),\  \phi_x(0,y,z),\ \text{and}\ F_x$$ are known to fourth-order accuracy.
			(Recall that, $F$ contains the terms involving only  $\partial_zH_y-\partial_y H_z$ and $\J$.)
			If, in addition, $\J=0$ and $\rho=0$, then
			$\phi_{xyy}(0,y,z)=\phi_{xzz}(0,y,z)=\phi_x(0,y,z)=F_x(0,y,z)=0$.
		\end{lemma}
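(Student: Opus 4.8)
The plan is to extract all four quantities from the governing equations rather than from the boundary tables, because the individual mixed derivatives of $\h$ on the face $x=0$ are precisely the entries that are \emph{undetermined} in Table~\ref{tab:bc_terms}. The three tools I would use are Gauss's law \eqref{maxdive} differentiated in $\tau$, the tangential Dirichlet conditions from \eqref{eq:BC_E}, and Amp\`ere's law \eqref{maxe}.

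First I would pin down the Neumann datum $\phi_x(0,y,z)$. Differentiating $\nabla\cdot\e=\rho/\varepsilon$ in $\tau$ gives $\partial_x\partial_\tau E_x+\partial_y\partial_\tau E_y+\partial_z\partial_\tau E_z=\partial_\tau\rho/\varepsilon$. On the face $x=0$ the conditions \eqref{eq:BC_E} force $\partial_\tau E_y\equiv\partial_\tau E_z\equiv 0$ as functions of $(y,z)$, so the tangential derivatives $\partial_y\partial_\tau E_y$ and $\partial_z\partial_\tau E_z$ vanish there. Hence $\phi_x(0,y,z)=\partial_x\partial_\tau E_x(0,y,z)=\partial_\tau\rho(0,y,z)/\varepsilon$, recovering the Neumann condition recorded in Section~\ref{sec:prelimi}. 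Since this identity holds at \emph{every} $(y,z)$ on the face, I would then differentiate it tangentially (using commutativity of partials for the smooth fields) to obtain $\phi_{xyy}(0,y,z)=\partial_y^2\partial_\tau\rho/\varepsilon$ and $\phi_{xzz}(0,y,z)=\partial_z^2\partial_\tau\rho/\varepsilon$; both are expressed purely through the prescribed charge density and vanish when $\rho=0$.

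For $F_x$, recall from Table~\ref{tab:spec} that $F=Z(\partial_y H_z-\partial_z H_y)+\tfrac{h_{\tau}^2}{24}P(\J)_x$. The key step is to avoid differentiating $\partial_y H_z$ and $\partial_z H_y$ separately, as those are exactly the unavailable quantities, and instead invoke Amp\`ere's law \eqref{maxe}, $\partial_\tau E_x=Z(\partial_y H_z-\partial_z H_y)-Z\J_x$, to write $Z(\partial_y H_z-\partial_z H_y)=\phi+Z\J_x$. Then $F=\phi+Z\J_x+\tfrac{h_{\tau}^2}{24}P(\J)_x$, and differentiating in $x$ and inserting the value of $\phi_x$ found above yields $F_x(0,y,z)=\partial_\tau\rho/\varepsilon+Z\,\partial_x\J_x+\tfrac{h_{\tau}^2}{24}\,\partial_x\big(P(\J)_x\big)$ at $x=0$. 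This is determined entirely by the sources, hence known, and reduces to $0$ when $\J=0$ and $\rho=0$.

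The main obstacle is conceptual rather than computational: the naive route of reading $\partial_x\partial_y H_z$ and $\partial_x\partial_z H_y$ off the boundary tables fails, and one must recognize that routing through Amp\`ere's law converts the offending curl-of-$\h$ combination into $\partial_\tau E_x$, whose normal derivative is in turn fixed by Gauss's law together with the tangential Dirichlet data. A minor remaining point is accuracy bookkeeping: every quantity has been reduced to space/time derivatives of the smooth prescribed data $\rho$ and $\J$, so each can be evaluated to fourth-order accuracy, which is what \eqref{eq:gst} requires (the terms $\phi_{xyy},\phi_{xzz},F_x$ enter multiplied by $h^2$, so this suffices for the overall $O(h^4)$ ghost-value reconstruction).
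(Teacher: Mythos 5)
Your proposal is correct and follows essentially the same route as the paper: you obtain $\phi_x$, $\phi_{xyy}$, $\phi_{xzz}$ from the time-differentiated Gauss law combined with the tangential Dirichlet conditions \eqref{eq:BC_E}, and you recover $F_x$ by routing the combination $\partial_y H_z-\partial_z H_y$ through Amp\`ere's law so that only the already-specified Neumann datum $\partial_{x\tau}E_x=\partial_\tau\rho/\varepsilon$ is required. The only cosmetic difference is that you restrict the Gauss-law identity to the face $x=0$ and then differentiate tangentially, whereas the paper differentiates Gauss's law in the bulk and then sets $x=0$; the two computations are interchangeable and yield the same formulas, including the vanishing of all four quantities when $\rho=0$ and $\J=0$.
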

		\begin{proof}
			$\phi_x(0,y,z)$ equals to $\partial_\tau E_x$ at $x=0$, which  has already been specified.
			Moreover, we have
			\begin{equation}\label{eq:lem_eq}
				\partial_x E_x+\partial_y E_y+\partial_z E_z=\frac{\rho}{\varepsilon}.
			\end{equation}
			Differentiating \eqref{eq:lem_eq} twice with respect to $y$ and once with respect to $\tau$, we arrive at
			$$
			-\partial_{xyy}\phi+\partial_{\tau yy}(\rho/\varepsilon)=
			\partial_{yyy}\phi+\partial_{zyy}\phi.
			$$
			If $x=0$, then $\partial_{\tau yyy}E_x=\partial_{\tau zyy}E_x=0$,
			and hence if $x=0$, then $\partial_{xyy}\phi=\partial_{\tau yy}(\rho/\varepsilon)$.
			Differentiating \eqref{eq:lem_eq} twice with respect to $z$ and once with respect to $\tau$, we get
			$$
			-\partial_{xzz}\phi+\partial_{\tau zz}(\rho/\varepsilon)=
			\partial_{yzz}\phi+\partial_{zzz}\phi.
			$$
			If $x=0$, then $\partial_{\tau zzz}E_x=\partial_{\tau yzz}E_x=0$,
			and hence at $x=0$ we have $\partial_{xzz}\phi=\partial_{\tau zz}(\rho/\varepsilon)$.
			
			To obtain $F_x$ at $x=0$, it is sufficient to evaluate
			$\partial_x (\partial_z H_y- \partial_y H_z  )$.
			This quantity, in turn, can be derived via the Amp\`ere law $\partial_\tau \e=Z(\nabla \times \h-\J)$ if 
			$\partial_{x\tau}E_x$ is available. And the latter is known through the Neumann boundary condition for $\partial_{\tau}\e$ at $x=0$.
			
			The second assertion follows immediately by virtue of the same argument.
		\end{proof}
		
		By repeating the previous proof for  the remaining faces of the cube $\Omega=[0,\,1]^3$ and  field components $E_y$ and $E_z$, we obtain the following 
		
		\begin{cor}
			Neumann boundary conditions for equation \eqref{eq:mhe} admit a compact fourth-order accurate discretization by means of Method 1.
		\end{cor}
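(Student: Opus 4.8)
The plan is to deduce the corollary directly from the preceding Lemma, after checking that the discrete unknown $\delta_\tau\e$ may be treated exactly as the continuous field $\partial_\tau\e$ analysed there. From Table~\ref{tab:bc}, among the Cartesian components of $\partial_\tau\e$ a Neumann face occurs only for $\delta_\tau E_x$ at $x=0,1$, for $\delta_\tau E_y$ at $y=0,1$, and for $\delta_\tau E_z$ at $z=0,1$; on every other face the condition is Dirichlet and no ghost value is needed. It therefore suffices to treat one representative scalar equation coming from \eqref{eq:mhe}, say the one for $\phi=\delta_\tau E_x$ carrying a Neumann condition at $x=0$, and then transfer the argument by symmetry.

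First I would instantiate Method~1 for this equation. Relation \eqref{eq:gst} expresses the ghost value $\phi(-\tfrac12 h,y,z)$ in terms of the prescribed normal derivative $\phi_x(0,y,z)$, the first interior value $\phi(\tfrac12 h,y,z)$, and the correction $\tfrac{h^2}{24}\bigl(\phi_{xyy}+\phi_{xzz}-\kappa^2\phi_x-\kappa^2F_x\bigr)$ evaluated at $x=0$. Hence the ghost value is fixed to fourth order as soon as $\phi_x$, $\phi_{xyy}$, $\phi_{xzz}$ and $F_x$ are known to fourth order on $x=0$, and these are precisely the quantities certified by the Lemma. Because the time difference $\delta_\tau$ commutes with every spatial derivative, the Lemma's identities---$\phi_x$ the Neumann datum, $\phi_{xyy}$ and $\phi_{xzz}$ obtained from Gauss's law \eqref{eq:lem_eq} differentiated tangentially and in $\tau$, and $F_x$ obtained from Amp\`ere's law---hold verbatim with $\partial_\tau$ replaced by $\delta_\tau$. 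Substituting into \eqref{eq:gst} and solving for $\phi(-\tfrac12 h,y,z)$ then yields a ghost value enforcing the Neumann condition to $O(h^4)$ while involving only the boundary-adjacent interior node and the single ghost node, so the stencil remains compact.

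Next I would extend to the remaining faces and components. For $\phi=\delta_\tau E_y$ (Neumann at $y=0,1$) and $\phi=\delta_\tau E_z$ (Neumann at $z=0,1$), the invariance of the cube $[0,1]^3$ and the cyclic $x\to y\to z$ structure of \eqref{eq:mhe} allow the Lemma to be repeated after permuting the coordinate roles, while the opposite faces $x=1$, $y=1$, $z=1$ follow by reflection. This is exactly the content of the remark preceding the corollary, and it furnishes fourth-order compact ghost values on every Neumann face of \eqref{eq:mhe}.

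The one genuinely substantive ingredient, already supplied by the Lemma, is that the mixed derivatives $\phi_{xyy}$ and $\phi_{xzz}$ at the boundary are \emph{not} part of the imposed Neumann data and must instead be reconstructed from the governing equations; so for the corollary itself the only thing left to confirm is the legitimacy of the symmetry reduction to the single case of the Lemma in each of the six instances, which is routine. Collecting the three scalar discretizations then gives the claimed compact fourth-order treatment of the Neumann conditions for the full vector equation \eqref{eq:mhe}.
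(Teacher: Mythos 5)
Your proposal is correct and takes essentially the same route as the paper: the corollary is obtained directly from the preceding Lemma by instantiating Method~1 via \eqref{eq:gst} for the component carrying the Neumann face (the Lemma supplies exactly the needed quantities $\phi_x$, $\phi_{xyy}$, $\phi_{xzz}$, $F_x$ at the boundary to fourth order) and then repeating the argument for the remaining faces and the components $E_y$, $E_z$ by the cyclic symmetry of the cube, which is precisely what the paper's remark preceding the corollary states. Your added observation that $\delta_\tau$ commutes with spatial derivatives, so the Lemma's conclusions for $\partial_\tau\e$ transfer to the discrete unknown $\delta_\tau\e$ actually solved for in \eqref{eq:mhe}, is a legitimate refinement of a step the paper leaves implicit.
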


		Next, we will show how one can use Method 2 to build a compact discretization of  the Neumann boundary conditions for $\partial_\tau \h$.
		
		\begin{lemma}
			Let $\phi(x,y,z)=\partial_\tau H_y$ be evaluated at some fixed $\tau_0$. Then, the term $\phi_{xxx}$ at $x=0,1$ can be approximated to fourth-order accuracy.
			
		\end{lemma}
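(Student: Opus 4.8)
The plan is to exploit the fact that, by the wave equation \eqref{secondorderh}, the scalar field $\phi=\partial_\tau H_y$ itself solves a wave equation, which lets us trade two of the three normal derivatives in $\phi_{xxx}$ for one temporal second derivative and two tangential second derivatives. First I would differentiate the $y$-component of \eqref{secondorderh} once in $\tau$, which gives
\begin{equation*}
\partial_\tau^2\phi=\Delta\phi-(\nabla\times\partial_\tau^2\J)_y
\qquad\text{in }(0,1)^3 ,
\end{equation*}
since $\phi=\partial_\tau H_y$. Solving for the normal second derivative yields
\begin{equation*}
\partial_{xx}\phi=\partial_\tau^2\phi-\partial_{yy}\phi-\partial_{zz}\phi+(\nabla\times\partial_\tau^2\J)_y .
\end{equation*}

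Next I would differentiate this identity once more in $x$ and pass to the face $x=0$ (and symmetrically $x=1$) by continuity of the smooth fields, obtaining
\begin{equation*}
\partial_{xxx}\phi=\partial_\tau^2(\partial_x\phi)-\partial_{yy}(\partial_x\phi)-\partial_{zz}(\partial_x\phi)+\partial_x(\nabla\times\partial_\tau^2\J)_y ,
\end{equation*}
so that every surviving term carries a factor $\partial_x\phi$ or acts directly on the source. The crucial input is the Neumann condition $\partial_x\phi=\partial_\tau\J_z$ at $x=0,1$, derived above and recorded in Table~\ref{tab:bc}, which holds along the whole face for all $\tau$ and may therefore be differentiated tangentially and in time. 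Substituting it, I get
\begin{equation*}
\partial_{xxx}\phi=\partial_\tau^3\J_z-\partial_{yy}\partial_\tau\J_z-\partial_{zz}\partial_\tau\J_z+\partial_x(\nabla\times\partial_\tau^2\J)_y\qquad\text{at }x=0,1 .
\end{equation*}
Every term on the right-hand side is a derivative of the prescribed source $\J$, hence known: the tangential derivatives $\partial_{yy},\partial_{zz}$ of the boundary data are approximated by fourth-order one-dimensional stencils, while the temporal and normal source derivatives are evaluated analytically. This establishes that $\phi_{xxx}$ is available to fourth-order accuracy, and in particular $\phi_{xxx}=0$ when $\J=0$, enabling Method~2.

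The step I expect to be the main obstacle is the reduction itself: the Neumann condition supplies only the first normal derivative $\partial_x\phi$, whereas Method~2 requires the third normal derivative $\partial_{xxx}\phi$, which cannot be read off directly. The resolution is that the wave equation for $\phi$ lowers the normal-derivative count by two at the cost of one temporal and two tangential derivatives; once the single remaining $\partial_x$ is applied, it lands precisely on the known data $\partial_x\phi=\partial_\tau\J_z$ and on the source. The only points to verify are that the partial derivatives commute and that the boundary identity may legitimately be differentiated along the face and in $\tau$, both of which are immediate for smooth fields since $\partial_x\phi=\partial_\tau\J_z$ holds identically on $x=0,1$. The argument for $H_x$ and $H_z$, and for the remaining faces, is identical after permuting the coordinate roles.
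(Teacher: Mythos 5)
Your proposal is correct, but it follows a genuinely different route from the paper's proof. You work with the second-order form: differentiating the wave equation \eqref{secondorderh} in $\tau$ gives a wave equation for $\phi=\partial_\tau H_y$, which converts $\partial_{xx}\phi$ into $\partial_\tau^2\phi-\partial_{yy}\phi-\partial_{zz}\phi$ plus a source term; the one remaining $x$-derivative then lands exactly on the Neumann datum $\partial_x\phi=\partial_\tau\J_z$, which holds identically on the faces $x=0,1$ and so may legitimately be differentiated tangentially in $y$, $z$, $\tau$. The paper instead stays entirely within the first-order system: it uses the $z$-component of Amp\`ere's law \eqref{eq:lem2} with the Dirichlet condition $\partial_\tau E_z=0$ at $x=0,1$ to relate $\partial_{\tau xxx}H_y$ to $\partial_{\tau yxx}H_x$, then the solenoidal constraint $\partial_\tau\nabla\cdot\h=0$ to trade normal derivatives of $H_x$ for tangential ones, and finally Amp\`ere's law again (together with $\partial_\tau E_y=0$ and $\partial_\tau H_x=0$ from \eqref{eq:BC_H}) to reduce everything to derivatives of $\J$. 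Both arguments reach the same conclusion: $\phi_{xxx}$ at $x=0,1$ is an explicit combination of derivatives of the prescribed current, hence zero when $\J=0$, as needed for Method~2. Your version buys two things: it needs only one substitution and one boundary condition, and every boundary identity is differentiated only tangentially, so it sidesteps the delicate opening step of the paper's proof, where two normal derivatives are in effect applied across the boundary identity $\partial_\tau E_z=0$ (a step which, when $\rho$ and $\J$ are not zero, picks up additional --- though still known --- source terms). The paper's version, in turn, avoids invoking the curl-curl/wave form explicitly (the identity $-\nabla\times\nabla\times\h=\Delta\h$ under $\nabla\cdot\h=0$ is already baked into \eqref{secondorderh}, so the same ingredient appears in your argument, just packaged differently) and expresses the result through lower-order derivatives of $\J$, in the same style as the companion lemma for $\partial_\tau\e$. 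Two cosmetic remarks: the explicit value $\partial_x\phi=\partial_\tau\J_z$ appears in the displayed Neumann conditions of Section~\ref{sec:prelimi} rather than in Table~\ref{tab:bc}, which records only the type of condition; and since $\J$ is prescribed analytically, its tangential derivatives can be evaluated exactly rather than by one-dimensional stencils.
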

		\begin{proof}
			Assume without loss of generality that $Z=1$.
			The equation 
			\begin{equation}\label{eq:lem2}
				\partial_\tau E_z=\partial_{x}H_y-\partial_y H_x-\J_z
			\end{equation}
			(cf.\ (\ref{maxe})) and  boundary conditions $\partial_{\tau}E_z=0$ at $x=0,1$
			imply that at $x=0,1$
			$$
			\partial_{\tau xxx}H_y=\partial_{\tau yxx}H_x+\partial_{\tau xx}\J_z.
			$$
			Then, we use $\partial_\tau\nabla \cdot \h=0$  and replace
			$\partial_x H_x$ with $-\partial_{\tau y} H_y-\partial_{\tau z} H_z$.
			This yields:
			$$
			\partial_{\tau yxx}H_x=-\partial_{\tau yyx}H_y-\partial_{\tau xyz}H_z.
			$$
			To complete the proof, we have to approximate
			$\partial_{\tau yyx}H_y$ and $\partial_{\tau xyz}H_z$. This is done as follows.
			We use \eqref{eq:lem2} again to obtain 
			$$
			\partial_{\tau yyx}H_y=\partial_{\tau yyy}H_x+\partial_{\tau yy}\J_z.
			$$
			At $x=0,1$, the first equation of (\ref{eq:BC_H}) implies that $\partial_{\tau y y y}H_x=0$.
			
			To approximate $\partial_{\tau x y z}H_z$, we use   the $y$-component of the Amp\`ere law (\ref{maxe}):
			$$
			\partial_\tau E_y=\partial_{z}H_x-\partial_{x}H_z-\J_y.
			$$
			Therefore, at $x=0,1$ we have:
			$$
			\partial_{x}H_z=\J_y+\partial_z H_x
			$$
			and
			$$\partial_{\tau x y z}H_z=\partial_{\tau yz }\J_y+
			\partial_{\tau yzz }H_x.
			$$
			Since $\partial_{\tau}H_x=0$ at $x=0,1$, we finally derive:
			$$
			\partial_{\tau y zz}H_x=0.
			$$
			This completes the proof.
		\end{proof}
		By repeating the previous procedure for $H_x$ and $H_z$, we obtain the following corollaries.
		\begin{cor}
			Neumann boundary conditions for equation \eqref{eq:mhh} admit a compact fourth-order accurate discretization by means of Method 2.
		\end{cor}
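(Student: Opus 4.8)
The plan is to reduce the corollary to the single ingredient supplied by the preceding Lemma and then transport that ingredient to every Neumann face of \eqref{eq:mhh} by relabelling coordinates. Recall that Method~2, through \eqref{eq:gst2}, fixes the ghost value of $\phi$ across a face with outward normal $n$ once two quantities are known there to fourth order: the normal derivative $\partial_n\phi$ and the third normal derivative $\partial_{nnn}\phi$. For $\phi$ a component of $\delta_\tau\h\approx\partial_\tau\h$ the first of these is exactly the prescribed Neumann datum (Table~\ref{tab:bc}), so the entire content of the statement is the fourth-order computability of $\partial_{nnn}\phi$ at each Neumann face. The preceding Lemma established precisely this for the pair $(\phi,n)=(\partial_\tau H_y,\,x)$; I would organize the proof as a verification that the same computation goes through for the remaining pairs.

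First I would enumerate, from Table~\ref{tab:bc}, the faces carrying a Neumann condition for each magnetic component: $\partial_\tau H_x$ at $y=0,1$ and $z=0,1$, $\partial_\tau H_y$ at $x=0,1$ and $z=0,1$, and $\partial_\tau H_z$ at $x=0,1$ and $y=0,1$. Thus there are six $(\text{component},\text{face})$ pairs, one of which is handled by the Lemma. For a generic pair write $n$ for the normal, $H_m$ for the component ($m\ne n$), and $\ell$ for the third axis, so that $\{n,m,\ell\}=\{x,y,z\}$. I would then run the Lemma's reduction verbatim: (i) use the scalar component of Amp\`ere's law \eqref{maxe} containing $\partial_n H_m$, together with the Dirichlet condition $\partial_\tau E_\ell=0$ holding on the $n$-face (since $E_\ell$ is tangential there), to express $\partial_{nnn}\partial_\tau H_m$ through derivatives of the normal component $H_n$ plus a $\J$-term; (ii) invoke the constraint $\partial_\tau\nabla\cdot\h=0$ to convert the remaining normal derivatives of $H_n$ into mixed derivatives of $H_m$ and $H_\ell$; (iii) apply \eqref{maxe} once more to each residual term and use the Dirichlet conditions $\partial_\tau E_\bullet=0$ and $\partial_\tau H_n=0$ on the $n$-face (from \eqref{eq:BC_H}) to reduce every surviving magnetic term to a purely tangential derivative of $H_n$, which vanishes, leaving only prescribed $\J$-data. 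The output is $\partial_{nnn}\phi$ to fourth order.

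The reason this one computation covers all six pairs is the symmetry of the data: the system \eqref{eq:maxwellv}, the cube $[0,1]^3$, the divergence constraint, and the parallel/perpendicular Dirichlet–Neumann pattern of Table~\ref{tab:bc} are invariant under the relabellings of the axes $(x,y,z)$ carrying $(E_x,E_y,E_z)$ and $(H_x,H_y,H_z)$ along. The three cyclic pairs therefore follow from the Lemma by this axis-relabelling invariance, and the three remaining pairs are obtained by the identical reduction with the roles of two axes exchanged; only the structure of the chain, not the signs, matters for fourth-order computability. In every case the termination step in (iii) calls upon exactly the Dirichlet condition $\partial_\tau H_n=0$ that Table~\ref{tab:bc} guarantees on the $n$-face. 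Having $\partial_n\phi$ and $\partial_{nnn}\phi$ both to fourth order, \eqref{eq:gst2} then fixes each ghost value to fourth order, which is the assertion of the corollary.

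The main obstacle I anticipate is not analytic but combinatorial: one must check, for each of the six pairs, that the chain of substitutions in (i)--(iii) really closes on an \emph{available} Dirichlet condition rather than on an unknown normal derivative. This is where the perpendicular-component-Dirichlet rule is essential---had the termination in (iii) required a Neumann component instead, the chain would not close. The cleanest way to discharge this is to carry out the reduction once in the generic $(n,m,\ell)$ notation, confirm that only $\partial_\tau E_\ell=0$, $\partial_\tau H_n=0$, and known $\J$-terms are ever invoked, and then read off the six pairs from Table~\ref{tab:bc}.
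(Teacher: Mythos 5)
Your proposal is correct and takes essentially the same route as the paper: the paper proves the Lemma for the single pair $(\partial_\tau H_y,\ x=0,1)$ using Amp\`ere's law, the constraint $\partial_\tau\nabla\cdot\h=0$, and the Dirichlet data of Table~\ref{tab:bc}, and then deduces the corollary exactly by ``repeating the previous procedure'' for the other components and faces, which is the axis-relabelling argument you formalize. Your generic $(n,m,\ell)$ bookkeeping, together with the explicit check that each substitution chain terminates on an available Dirichlet condition ($\partial_\tau E_\ell=0$, $\partial_\tau H_n=0$) before feeding $\partial_n\phi$ and $\partial_{nnn}\phi$ into \eqref{eq:gst2}, is simply a more careful write-up of that same one-line argument.
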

		\begin{cor}\label{cor:ghost_cor}
			Assume that $\rho=0$ and $\J=0$.
			Then, all Neumann boundary conditions obtained for $\partial_\tau\e$ and $\partial_\tau \h$ are homogeneous
			and all the  derivatives in \eqref{eq:gst}, \eqref{eq:gst2} vanish. In particular,
			$\phi(\tilde{x}-\hf)=\phi(\tilde{x}+\hf)$ is a fourth-order accurate approximation to the homogeneous Neumann condition
			at the boundary point $\tilde{x}$ (see  also Section \ref{sec:TE}).
		\end{cor}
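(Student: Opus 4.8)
The plan is to obtain the corollary as a direct specialization of the two preceding lemmas (and their corollaries) to the source-free regime $\rho\equiv 0$, $\J\equiv 0$, so that essentially no fresh computation is required: one only checks that every quantity occurring in the ghost-point formulas \eqref{eq:gst} and \eqref{eq:gst2} has already been shown to vanish. First I would verify that the Neumann data themselves become homogeneous. Every Neumann-type condition collected in Section~\ref{sec:prelimi} has a right-hand side that is a derivative of $\rho$ or of a component of $\J$: those for $\partial_\tau\e$ read $\partial_s\partial_\tau E_s=\partial_\tau\rho/\varepsilon$ on the face $s=0,1$, while those for $\partial_\tau\h$ read $\partial_x\partial_\tau H_y=\partial_\tau\J_z$ and the five analogues. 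Putting $\rho=0$ and $\J=0$ annihilates all of these right-hand sides, which is the first assertion.

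Next I would treat the correction terms in \eqref{eq:gst}, where $\phi=\partial_\tau E_s$. The bracketed $O(h^2)$ term involves exactly $\phi_{xyy}$, $\phi_{xzz}$, $\phi_x$, and $F_x$ (the factor $\kappa^2\phi_x$ being a multiple of $\phi_x$). The second assertion of the first lemma already yields $\phi_{xyy}(0,y,z)=\phi_{xzz}(0,y,z)=\phi_x(0,y,z)=F_x(0,y,z)=0$ under $\rho=0$, $\J=0$, and the corollary stated immediately after that lemma extends this to every face and to the components $E_y,E_z$. Hence the whole correction in \eqref{eq:gst} vanishes.

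For \eqref{eq:gst2}, with $\phi=\partial_\tau H_s$, the single correction term is $\partial_{xxx}\phi=\partial_{\tau xxx}H_y$. I would retrace the chain established in the proof of the second lemma, now discarding each $\J$ contribution: one has $\partial_{\tau xxx}H_y=\partial_{\tau yxx}H_x=-\partial_{\tau yyx}H_y-\partial_{\tau xyz}H_z$, and both surviving terms reduce, by the very steps of that proof, to third tangential derivatives of $H_x$ on the face $x=0,1$, which vanish because $\partial_\tau H_x=0$ there. Thus $\partial_{xxx}\phi=0$, and the corollary following that lemma propagates the same conclusion to $H_x,H_z$ on all faces.

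Finally, with every correction term eliminated, both formulas collapse to the bare centered difference $\phi_x(\tilde x)=\bigl(\phi(\tilde x+\hf h)-\phi(\tilde x-\hf h)\bigr)/h+O(h^4)$; imposing the now-homogeneous condition $\phi_x=0$ is therefore equivalent, to fourth order, to the ghost symmetry $\phi(\tilde x-\hf)=\phi(\tilde x+\hf)$, which is the last claim. I do not expect a real obstacle here, as the statement is bookkeeping over the two lemmas; the one point requiring care is to confirm that \emph{every} derivative actually appearing in \eqref{eq:gst} and \eqref{eq:gst2}---in particular the tangential third derivatives of $H_x$ arising in the $H_y$ computation---is covered by an already-proven vanishing statement.
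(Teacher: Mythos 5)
Your proposal is correct and follows essentially the same route as the paper: the paper states this corollary without a separate proof, treating it exactly as you do---as immediate bookkeeping obtained by specializing the two preceding lemmas (whose second assertions and proof chains already give the vanishing of every derivative in \eqref{eq:gst} and \eqref{eq:gst2} once $\rho=0$ and $\J=0$). Your careful check that the $H_y$ chain reduces to tangential third derivatives of $H_x$, which vanish on the faces $x=0,1$, is precisely the argument the paper's second lemma supplies.
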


		We emphasize that, the right hand side, $F$, needs to be approximated with fourth-order accuracy.
		This is done using a fourth-order Pad\'{e} approximation for the curl operator (see details in Appendix \ref{subsec:op_details}).
		Then, we solve equations 1, 2, and 3 from Table~\ref{tab:spec} using the scheme \eqref{eq:ell}, and subsequently solve equations 4, 5, and 6 from Table~\ref{tab:spec} using the scheme \eqref{eq:helm2}.
		
		In view of equations \eqref{eq:ell} and \eqref{eq:helm2}, we also define the following operators.
		\begin{defi}
			\label{def:operatorsP}
			For any $s=x,y,z$ and $G= H_s, E_s,$ 
			let $$P_{1}^{G}, P_{2}^{G}:\intr\Gwh^{G}\to \intr\Gwh^{G}$$ be the symmetric operators given by
			$$
			P_{1}^{G}:=-\left (\Delta_h+\frac{h^2}{6}\varUpsilon_h\right)+\frac{24}{h_{\tau}^2}\left (
			1+\frac{2}{r^2}
			\right),
			$$
			$$
			P_{2}^{G}:=\frac{24}{h_{\tau}^2}\left (
			-1-\frac{2}{r^2}-\frac{h^2}{12}\Delta_h
			\right).
			$$
			We omit the superscript $G$ whenever it does not lead to any ambiguity.
		\end{defi}
		
		Hereafter,  $r=\frac{h_{\tau}}{h}$ will denote the CFL number.
		As shown in  Appendix \ref{app:operatorestimates},  if $r<\sqrt{3+\sqrt{21}}$, then
		$P_2^G$ and  $P_1^G$ are symmetric  positive definite matrices and
		the following estimates hold:
		\begin{align}\label{eq:P1P2estimates}
			&
			0<\frac{2}{r^2}\leq \|\frac{h_{\tau}^2}{24}P_2^G\|\leq 1+\frac{2}{r^2},\\&
			1<1+\frac{2}{r^2}-\frac{r^2}{6}\leq\|\frac{h_{\tau}^2}{24}P_1^G\|\leq 1+\frac{2}{r^2}+\frac{r^2}{2}.
		\end{align}
		In particular,
		$$
		\|\left (P_{1}^{G}\right )^{-1}P_{2}^{G}\|\geq \frac{\frac{2}{r^2}}{1+\frac{2}{r^2}+\frac{r^2}{2}}.
		$$

		\subsection{The numerical scheme}\label{sec:discrete}
		Let
		$$
		\curl_h\stackrel{\text{def}}{=}
		\begin{pmatrix}
			0& -\delta_z & \delta _y\\
			\delta_z&0&-\delta_x\\
			-\delta_y&\delta_x&0\\
		\end{pmatrix}
		$$
		be the block matrix  representing a spatial Pad\'{e} fourth-order finite-difference approximation of the curl operator.
		See Appendix \ref{subsec:op_details} for the full definition of the matrix $\curl_h$.
		
		Let $\e^n_h$ and $\Delta\e^n_h $  be given on $\Gwh^{\e}$ and let $\h^{n+\frac{1}{2}}_h$ be given on $\Gwh^{\h}$. To advance in time, we take two steps.\\[2mm]
		{\bf Step 1}:
		$$
		{\e}_h^{n+1}=
		\begin{pmatrix}
			E_x^n+h_{\tau} E_x^{*}\\
			E_y^n+h_{\tau} E_y^{*}\\
			E_y^n+h_{\tau} E_y^{*}
		\end{pmatrix}
		$$
		where
		\begin{align*}
			&
			\underbrace{
				\begin{pmatrix}
					P_1^{E_x} & 0&0 \\
					0 & P_1^{E_y} &\\
					0&0&P_1^{E_z}
				\end{pmatrix}
			}_{P_1^\e}
			\begin{pmatrix}
				E_x^{*}\\
				E_y^{*} \\
				E_z^{*}
			\end{pmatrix}=ZP(\J^{n+1/2})+\\&\kappa^2Z
			\begin{pmatrix}
				0& -\delta_z & \delta _y\\
				\delta_z&0&-\delta_x\\
				-\delta_y&\delta_x&0
			\end{pmatrix}
			\left [
			\left(1+\frac{\kappa^2 h^2}{12} \right)
			\begin{pmatrix}
				H_x^{n+\frac{1}{2}}\\
				H_y^{n+\frac{1}{2}}\\
				H_z^{n+\frac{1}{2}}
			\end{pmatrix}+
			\frac{h^2}{12}
			\begin{pmatrix}
				\Delta_h H_x^{n+\frac{1}{2}}\\
				\Delta_h H_y^{n+\frac{1}{2}}\\
				\Delta_h H_z^{n+\frac{1}{2}}
			\end{pmatrix}
			\right]
		\end{align*}
		(cf.\ Section \ref{subsec:mh}).
		Extend ${\e}_h^{n+1}$ to $\Gwh^\e$ using the boundary conditions.
		Define
		$$\Delta \e_h^{n+1} =\Delta \e_h^n+\kappa^2(\e_h^{n+1}-\e_h^n)-Z\frac{24}{h_{\tau}}
		\begin{pmatrix}
			0& -\delta_z & \delta _y\\
			\delta_z&0&-\delta_x\\
			-\delta_y&\delta_x&0
		\end{pmatrix}
		\begin{pmatrix}
			H_x^{n+\frac{1}{2}}\\
			H_y^{n+\frac{1}{2}}\\
			H_z^{n+\frac{1}{2}}
		\end{pmatrix}-h_{\tau} P(\J)^{n+\hf}
		$$
		on $\Gwh^\e$
		(see \eqref{eq:mh})
		\\[2mm]
		{\bf Step 2}:
		$$
		{\h}_h^{n+\frac{3}{2}}=
		\begin{pmatrix}
			H_x^{n+\hf}+h_{\tau} H_x^{*}\\
			H_y^{n+\hf}+h_{\tau} H_y^{*}\\
			H_y^{n+\hf}+h_{\tau} H_y^{*}
		\end{pmatrix}
		$$
		where
		\begin{align*}
			&
			\underbrace{
				\begin{pmatrix}
					P_1^{H_x}  & 0&0 \\
					0 & 	P_1^{H_y}  &\\
					0&0&	P_1^{H_z}
				\end{pmatrix}
			}_{P_1^\h}
			\begin{pmatrix}
				H_x^{*}\\
				H_y^{*} \\
				H_z^{*}
			\end{pmatrix}-\frac{1}{Z}\nabla \times \partial_{\tau}^2\J^{n+1}
			=\\&
			-\frac{\kappa^2}{Z}
			\begin{pmatrix}
				0& -\delta_z & \delta _y\\
				\delta_z&0&-\delta_x\\
				-\delta_y&\delta_x&0
			\end{pmatrix}
			\left [
			\left(1+\frac{\kappa^2h^2}{12} \right)
			\begin{pmatrix}
				E_x^{n+1}\\
				E_y^{n+1}\\
				E_z^{n+1}
			\end{pmatrix}+
			\frac{h^2}{12}
			\begin{pmatrix}
				\Delta E_x^{n+1}\\
				\Delta E_y^{n+1}\\
				\Delta E_z^{n+1}
			\end{pmatrix}
			\right],
		\end{align*}
		(cf.\ Section \ref{subsec:mh}).
		Extend $\h_h^{n+3/2}$
		to $\Gwh^\h$ using the appropriate boundary condition.

		\section{stability analysis}\label{sec:stability}
		We assume, with no loss of generality, that $\rho\!=\!0$, $\J \!=\! 0$, and $Z \!=\! 1$.
		The scheme can therefore be written in a compact way:
		\begin{subequations}
			\label{eq:eh}
			\begin{align}
				\e_h^{n+1} -\e_h^{n} &= h_{\tau} P_1^{-1} \curl_h
				\left(
				\kappa^2\left(1+\frac{\kappa^2h^2}{12}\right)\h_h^{n+1/2}+\frac{\kappa^2h^2}{12}\Delta \h_h^{n+1/2}\right),
				\label{eq:c1} \\
				\h_h^{n+3/2} -\h_h^{n+1/2} &= -h_{\tau} P_1^{-1} \curl_h
				\left(
				\kappa^2\left(1+\frac{\kappa^2h^2}{12}\right) \e_h^{n+1}+\frac{\kappa^2h^2}{12}\Delta \e_h^{n+1} \right).
				\label{eq:c2}
			\end{align}
		\end{subequations}
		We  approximate $\Delta $  in equations \eqref{eq:c1} and \eqref{eq:c2}  using the standard second-order difference  Laplacian $\Delta_h$ (same  as in \eqref{eq:ell}).
		Then, we recast  \eqref{eq:c1}, \eqref{eq:c2} as:
		\begin{align}
			\nonumber
			&\left (\bf{1}-
			h_{\tau}
			\begin{pmatrix}
				0&0\\
				-P_1^{-1}P_2\curl_h& 0
			\end{pmatrix}
			\right)
			\begin{pmatrix}
				\e^{n+1} \\
				\h^{n+3/2}
			\end{pmatrix}=  \\
			\label{eq:stab1}
			&\left (\bf{1}-h_{\tau} 
			\begin{pmatrix}
				0 &-P_1^{-1}P_2\curl_h\\
				0& 0
			\end{pmatrix}
			\right)
			\begin{pmatrix}
				\e^{n} \\
				\h^{n+1/2}
			\end{pmatrix},
		\end{align}
		where the operators $P_1$ and $P_2$ are introduced in Definition~\ref{def:operatorsP}.
		Next, assume  the solution is in the form of a plane wave:
		$$
		\begin{pmatrix}
			\e^{n}\\
			\h^{n+1/2}
		\end{pmatrix}_{i,j,k}=
		\sigma^n
		(\exp(-\sqrt{-1}(\xi_xih+\xi_yjh+\xi_zkh)))
		\begin{pmatrix}
			\e_0\\
			\h_0
		\end{pmatrix}.
		$$
		Let  $Q=-P_1^{-1} P_2 \curl_h$ and
		let $\lambda$ denote an eigenvalue of $Q$.
		We substitute the plane wave solution into \eqref{eq:stab1} and derive
		$$
		\begin{pmatrix}
			\sigma-1& \lambda h_{\tau}\\
			-\sigma h_{\tau} \lambda & \sigma-1
		\end{pmatrix}
		\begin{pmatrix}
			\e_0\\
			\h_0
		\end{pmatrix}=
		\begin{pmatrix}
			0\\
			0
		\end{pmatrix}
		.
		$$
		Therefore,
		$$
		\sigma^2-\sigma(2-\lambda^2h_{\tau}^2)+1=0.
		$$
		In particular, $|h_{\tau}\lambda|\leq 2$ implies that $|\sigma|\leq 1$. Then, using the definition of the matrix $A$ given by equation \eqref{eq:lhs} and Remark \ref{rem:curl estimates} (see Appendix \ref{subsec:op_details}), we can derive the stability  condition in the following form:
		\begin{equation}\label{eq:b_effects}
			r2\sqrt{3}\|A^{-1}\| \|-P_1^{-1}P_2\|\leq 2 \Leftrightarrow
			r\leq \frac{1}{\sqrt{3}\|A^{-1}\|\cdot \|-P_1^{-1}P_2\|}.
		\end{equation}
		By \eqref{eq:P1P2estimates}, $r<\sqrt{3+\sqrt{21}}$ implies that $\|-P_1^{-1}P_2\|\geq \frac{\frac{2}{r^2}}{1+\frac{2}{r^2}+\frac{r^2}{2}}$ (see Appendix \ref{app:operatorestimates} for detail).
		Therefore, the scheme is stable provided that
		$$
		r\leq\min\left \{
		\sqrt{3+\sqrt{21}}, \frac{1+\frac{2}{r^2}+\frac{r^2}{2}}{\sqrt{3}\|A^{-1}\|\frac{2}{r^2}}
		\right \}.
		$$
		The inequality
		$$
		\frac{1+\frac{2}{r^2}+\frac{r^2}{2}}{\frac{2}{r^2}}\geq 1, \quad r\in (0,\infty)
		$$
		gives rise to 
		a sufficient stability condition
		\begin{equation}
			0<r\leq \frac{1}{\sqrt{3}\|A^{-1}\|}\sim\frac{5}{6\sqrt{3}}.
		\end{equation}

		In the following section we will examine a two-dimensional reduction of Maxwell's equation. A modification of  Remark \ref{rem:curl estimates} readily implies the stability condition 
		\begin{equation}
			\label{stabCFL}
			0<r\leq \frac{1}{\sqrt{3}\|A^{-1}\|}\sim\frac{5}{6\sqrt{2}}
		\end{equation}
		(cf. \cite{yefet_turkel}).
		The numerical results summarized in Table \ref{table:cfl} corroborate the stability estimate (\ref{stabCFL}).
		\section{Example: Transverse magnetic waves  in $[0,1]^2$}\label{sec:TE}
		
		\subsection{Numerical simulations data}
		
		For the computations, we consider the scaled two-dimensional TM system without any current or charges.
		Thus, the equations reduce to
		\begin{subequations}
			\label{maxwell2d}
			\begin{align}
				\frac{\partial E_z}{\partial \tau} &= Z \left(-\frac{\partial H_x}{\partial y} + \frac{\partial H_y}{\partial x} \right), \\
				\frac{\partial H_x}{\partial \tau} &= - \frac{1}{Z} \frac{\partial E_z}{\partial y}, \\
				\frac{\partial H_y}{\partial \tau} &= \frac{1}{Z} \frac{\partial E_z}{\partial x}.
			\end{align}
		\end{subequations}
		Since  $\h$ is given at time moments $n+1/2$ we need to specify its initial condition at the time $t\!=\! \frac{\Delta_\tau}{2}$.
		By Taylor series,
		\begin{equation}
			\label{talorh}
			\h \left(\frac{h_{\tau}}{2}\right) \!=\! \h (0) \!+\! \left(\frac{h_{\tau}}{2}\right) \h_{\tau} (0) \!+\! \frac{1}{2} \left(\frac{h_{\tau}}{2}\right)^2 \h_{\tau \tau} (0)
			\!+\! \frac{1}{6} \left(\frac{h_{\tau}}{2}\right)^3 \h_{\tau \tau \tau}(0) \!+\! \frac{1}{24} \left(\frac{h_{\tau}}{2}\right)^4 \h_{\tau \tau \tau \tau} (0).
		\end{equation}
		$\h_{\tau} (0)$ is given by \eqref{maxh} and $\h_{\tau \tau} (0)$ is given by \eqref{secondorderh}. Differentiating \eqref{secondorderh} yields:
		\begin{equation*}
			\frac{ \partial^3 \h}{\partial \tau^3}=-\nabla \times \nabla \times\h_{\tau}-\nabla \times \partial_{\tau \tau}\J=
			\Delta \h_{\tau}-\nabla \times \partial_{\tau \tau}\J,
		\end{equation*}
		where, again, $\h_{\tau}$ is given by \eqref{maxh}. Similarly,
		\begin{equation*}
			\frac{ \partial^4 \h}{\partial \tau^4}=-\nabla \times \nabla \times\h_{\tau \tau}-\nabla \times \partial_{\tau^3}\J=
			\Delta \h_{\tau \tau}-\nabla \times \partial_{\tau \tau \tau}\J,
		\end{equation*}
		where $\h_{\tau \tau}$ is given by \eqref{secondorderh}.
		At the nodes on or next to a boundary, we have either Dirichlet or Neumann conditions as given in Table \ref{tab:bc}.
		For \eqref{maxwell2d}, we have assumed $\J \!=\! 0$. For the computations, we further assume $Z \!=\! 1$.
		
		Let $k_x,k_y\in \N$  and let $\omega:=Z\pi\sqrt{k_x^2+k_y^2}$.
		We consider the analytical solutions
		\begin{subequations}
			\begin{align}\label{eq:data}
				E_z(x,y)&=\cos(\omega \tau)
				\sin{\left(Z \pi  k_x x\right) } \sin{\left(Z \pi  k_y y\right) },
				\\
				H_x(x,y)&=-\frac{\sin( \omega \tau)}{\omega}
				\pi k_y \sin(Z \pi  k_x x)
				\cos{\left(Z \pi  k_y y\right) }, \\
				H_y(x,y)&=\frac{\sin\left( \omega \tau\right) }{\omega} \pi  k_x \cos{\left(Z \pi  k_x x\right) } \sin{\left( Z\pi  k_y y\right) }.
			\end{align}
		\end{subequations}


		We use the mean absolute error
		\begin{align*}
			\frac{1}{3N_{\tau}\cdot N^2}\sum_{i,j,n=0}^{N,N,N_{\tau}}
			&\left | {E_z}^n_{\mathrm{numer.}}(i,j)-{E_z}_{\mathrm{true}}^n(i,j) \right|+
			\left | {H_x}^n_{\mathrm{numer.}}(i,j)-{H_x}_{\mathrm{true}}^n(i,j) \right|+\\&
			\left | {H_y}^n_{\mathrm{numer.}}(i,j)-{H_y}_{\mathrm{true}}^n(i,j) \right|,
		\end{align*}
		where the subscripts ``$\mathrm{numer.}$'' and ``$\mathrm{true}$'' refer to the   numerical and  analytical solution, respectively.
		The ghost nodes are  easily defined using Corollary \ref{cor:ghost_cor}.
		The ghost points for $H_x$  (see Figure \ref{fig:ghost}) are defined similarly.

		\subsection{Comparable schemes:  $C4, NC, AI^h$}\label{subsec:comparble}
		We compare our proposed scheme, denoted by $C4$, with the following two schemes, which are second-order accurate  in time.
		These schemes are obtained according to the Yee updating rules:
		\begin{align*}
			&
			E_z^{n+1}=E_z^n+h_{\tau} (D_x H_y^{n+1/2}-D_yH_x^{n+1/2}),\\ &
			H_x^{n+3/2}=H_x^{n+1/2}-h_{\tau} D_y E_z^{n+1}, \\&
			H_y^{n+3/2}=H_y^{n+1/2}+h_{\tau} D_x E_z^{n+1},
		\end{align*}
		where $D_x$ and $D_y$ are finite difference operators that approximate the first derivative on different stencils.
		For the derivatives in the $x$ direction,  we consider a general stencil
		$$
		S=
		\frac{1}{h}
		\begin{pmatrix}
			-b & -a & a & b \\
			-d & -c& c &d \\
			-b & -a & a & b
		\end{pmatrix},
		$$
		whereas for the derivatives in the $y$ direction we use the transposed stencil $S^t$.
		In particular,
		\begin{align*}
			&
			\partial_x u_{ij}\sim \frac{c(u_{i+\hf,j}-u_{i-\hf,j})+
				d(u_{i+\frac{3}{2},j}-u_{i-\frac{3}{2},j})}{h}+\\&
			\frac{		a(u_{i+\hf,j+1}+u_{i+\hf,j-1}-u_{i-\hf,j+1}-u_{i-\hf,j-1})+
				b(u_{i+\frac{3}{2},j+1}+u_{i+\frac{3}{2},j-1}-u_{i-\frac{3}{2},j+1}-u_{i-\frac{3}{2},j-1})}{h}.
		\end{align*}
		For grid nodes near the boundary, we use the standard fourth-order accurate one-sided finite-difference approximation of the first derivatives.

		By a Taylor expansion, we obtain second-order accuracy provided that
		$$c+3d+2a+6b=1,$$
		and fourth-order accuracy if the additional constraints hold:
		\begin{subequations}
			\label{paramac}
			\begin{align}
				c+27d+2a+54b &=0, \\
				a+3b &=0.
			\end{align}
		\end{subequations}
		We define the following stencils:
		\begin{equation}
			\label{eq:K2}
			K_2=
			\frac{1}{\Delta x}
			\begin{pmatrix}
				-b & -a & a & b \\
				-d & -1+3d+2a+6b& 1-3d-2a-6b &d \\
				-b & -a & a & b
			\end{pmatrix},
		\end{equation}
		\begin{equation}
			\label{eq:K4}
			K_4=
			\frac{1}{\Delta x}
			\begin{pmatrix}
				\frac{a}{3} &-a&a&-\frac{a}{3}  \\
				-\frac{16a-1}{24}& - \frac{9-16a}{8}& \frac{9-16a}{8} &\frac{16a-1}{24} \\
				\frac{a}{3}  & - a& a & -\frac{a}{3}
			\end{pmatrix}.
		\end{equation}
		The  non-compact fourth-order accurate scheme that we denote NC exploits the stencil  $K_4$ with $a\!=\!0$.
		Its order of accuracy is $O(h^4)+O(h_{\tau}^2)$.
		
		The data-driven scheme $AI^h$ (see Appendix~\ref{subsec:AI}) uses  the stencil $K_2$, where the free parameters are obtained by means of a minimization process over the given training data.
		This scheme is of order  $O(h^2)+O(h_{\tau}^2)$.
		We use this approach since it has been recently shown to reduce the numerical dispersion for the wave equation \cite{ovadia}.
		The details of the minimization process and the selected training data are detailed in Appendix \ref{subsec:AI}.

		\begin{table} [ht!]
			\centering
			\begin{tabular}{|p{2cm}|p{3cm}|p{3cm}|p{3cm}|}
				\hline
				$N$  &NC&C4&$AI^h$
				\\ [0.5ex]
				\hline
				\multicolumn{4}{|c|}{$k_x=k_y=2$ (PPW 64), $r=\frac{5}{6\sqrt{2}}$} \\[2mm]
				\hline
				32 & 1.94 & 5.06 & 2.69 \\
				64 & 1.98 & 4.98 & 1.89 \\
				128 & 1.98 & 4.84 & 1.88 \\
				256 & 1.99 & 4.48 & 1.93 \\
				512 & 2.00 & 4.09 & 1.96 \\
				\hline
				\multicolumn{4}{|c|}{$k_x=k_y=21$
					(PPW $\sim$6), $r=\frac{5}{6\sqrt{2}}$} \\[2mm]
				\hline
				32 & 0.13 & 0.51 & 0.86 \\
				64 & 0.81 & 4.80 & 3.32 \\
				128 & 1.74 & 4.78 & 1.93 \\
				256 & 1.96 & 5.09 & 1.35 \\
				512 & 1.99 & 4.18 & 1.85 \\
				\hline
				\hline
				\multicolumn{4}{|c|}{$k_x=k_y=2$, $r=\frac{1}{6\sqrt{2}}$} \\[2mm]
				\hline
				32 & 1.31 & 5.21 & 1.88 \\
				64 & 1.86 & 4.40 & 1.97 \\
				128 & 1.97 & 3.96 & 1.99 \\
				256 & 2.00 & 3.92 & 1.99 \\
				512 & 2.00 & 3.94 & 2.00 \\
				\hline
				\multicolumn{4}{|c|}{$k_x=k_y=21$, $r=\frac{1}{6\sqrt{2}}$} \\[2mm]
				\hline
				32 & 0.02 & 0.30 & 0.43 \\
				64 & 3.17 & 4.28 & 0.55 \\
				128 & 7.46 & 4.00 & 1.61 \\
				256 & -0.97 & 3.88 & 1.94 \\
				512 & 1.66 & 3.93 & 1.99 \\
				\hline
			\end{tabular}
			\caption{Grid convergence for the three schemes we have chosen. $N$ is the grid dimension. PPW is the points-per-wavelength ratio.
				$T=\frac{1}{\sqrt{2}}$, $Z=1$. }
			\label{table:conv_rates}
		\end{table}
		
		\begin{table} [h!]
			\centering
			\begin{tabular}[p]{|c|c|c|c|}
				\hline
				$r$&NC&C4&$AI^h$
				\\ [2mm]
				\hline
				\multicolumn{4}{|c|}{$k_x=k_y=2$ (PPW 64)} \\[2mm]
				\hline
				$\frac{1}{6\sqrt{2}}$ & 2.86e-05 & 4.06e-07 & 7.92e-04 \\
				$\frac{2}{6\sqrt{2}}$ & 1.18e-04 & 3.38e-07 & 3.86e-03 \\
				$\frac{3}{6\sqrt{2}}$ & 2.66e-04 & 2.26e-07 & 3.05e-03 \\
				$\frac{4}{6\sqrt{2}}$ & 4.73e-04 & 1.01e-07 & 1.68e-03 \\
				$\frac{5}{6\sqrt{2}}$ & 7.38e-04 & 2.49e-07 & 9.23e-05 \\
				$\frac{1}{\sqrt{2}}$ &  $\infty$&$\infty$&$-$\\[2mm]
				\hline
				\multicolumn{4}{|c|}{$k_x=k_y=21$
					(PPW $\sim$6)} \\[2mm]
				\hline
				$\frac{1}{6\sqrt{2}}$ & 1.30e-01 & 5.72e-02 & 2.57e-01 \\
				$\frac{2}{6\sqrt{2}}$ & 3.39e-02 & 4.89e-02 & 2.95e-01 \\
				$\frac{3}{6\sqrt{2}}$ & 1.39e-01 & 3.49e-02 & 2.79e-01 \\
				$\frac{4}{6\sqrt{2}}$ & 2.93e-01 & 1.58e-02 & 2.55e-01 \\
				$\frac{5}{6\sqrt{2}}$ & 2.48e-01 & 2.60e-02 & 6.48e-02 \\
				$\frac{1}{\sqrt{2}}$ &  $\infty$&$\infty$& $-$\\[2mm]
				\hline
			\end{tabular}
			\caption{ The error for different CFL numbers $r$.
				$T\!=\!\frac{4}{\sqrt{2}}$, $Z=1$, $h\!=\! \frac{1}{64}$.}
			\label{table:cfl}
		\end{table}

		\subsection{Observations}
		In Table \ref{table:conv_rates}, we verify the fourth-order accuracy of our scheme C4 and provide the rates of grid convergence for the other two (NC, $AI^h$) schemes as well. 
		
		In Table \ref{table:cfl}, we examine the effect of the CFL number on the mean error and verify the results of our stability analysis (Section~\ref{sec:stability}).
		For the points-per-wavelength (PPW) ratio of 64 ($k_x=k_y=2$),
		the mean error is  smaller for scheme C4, as expected.
		Moreover, the NC scheme is more accurate than AI since it is of a higher spatial order,
		For the PPW ratio of approximately 6.4, the AI scheme shows a similar error to that of C4 even though it is of only second order. This is due to the fact that  the AI scheme was trained on coarse grids with low number of points of points per wavelength.

		In Figure \ref{fig:error(k)}, we examine the mean error as a function of the wave number.
		As expected, as the PPW ratio decreases, the C4 scheme no longer outperforms the other schemes. This is consistent with the Taylor approximation where  the local truncation error depends on higher-order derivatives that increase for shorter wavelengths (larger wavenumbers $\sqrt{k_x^2+k_y^2}$).
		
		\begin{figure}[ht!]
			\includegraphics[scale=0.6]{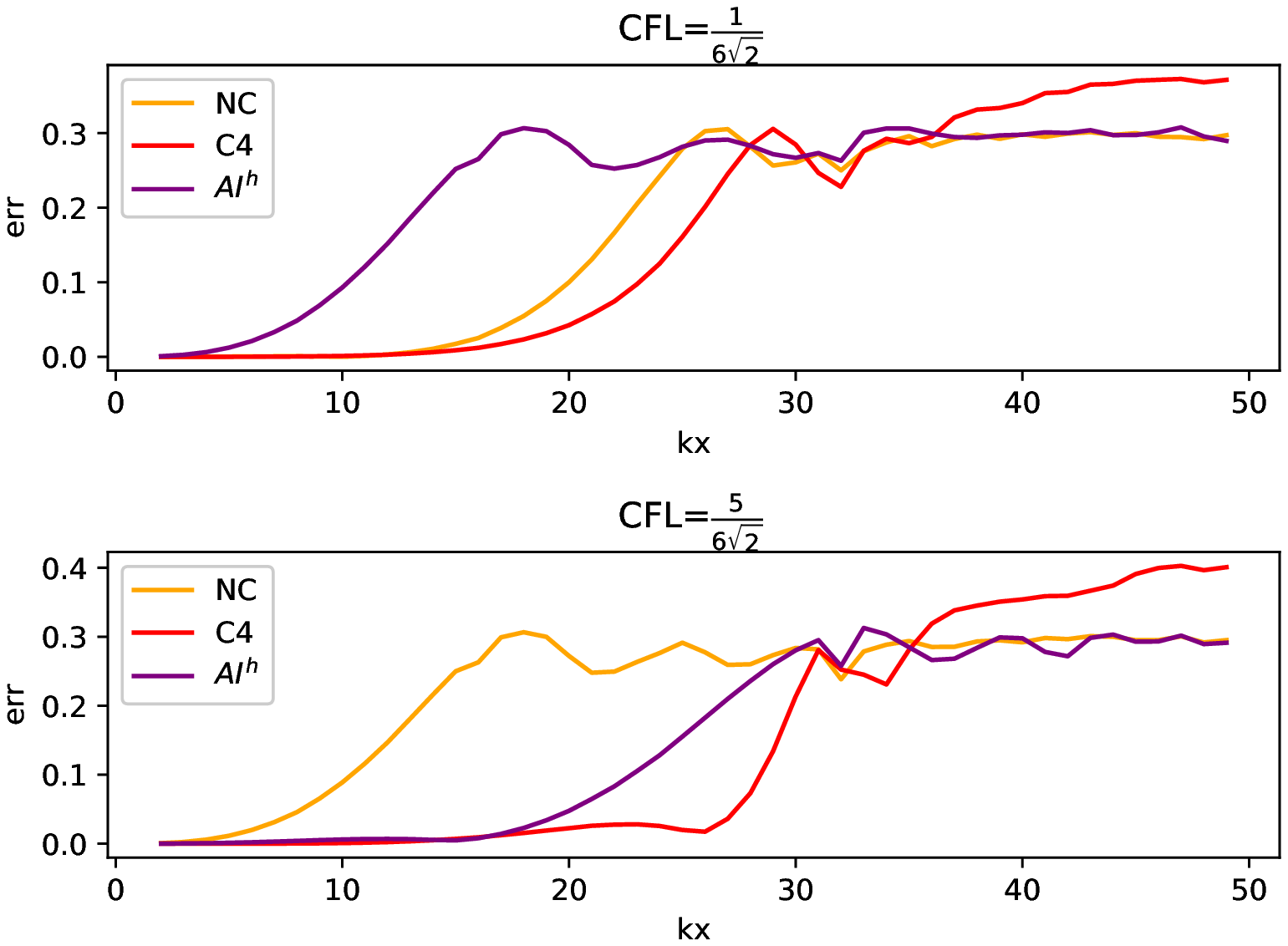}
			
			\caption{ In both panels $k_y=k_x $,
				$T=\frac{4}{\sqrt{2}}$, $Z=1$, and $h=\frac{1}{64}$. 		The points-per-wavelength ratio ranges from 64 ($\sqrt{k_x^2+k_y^2}=2$) to $\sim 2.5$ ($\sqrt{k_x^2+k_y^2}=50$).
			}
			\label{fig:error(k)}
		\end{figure}
		
		In Figures \ref{fig:6} and \ref{fig:7}, we examine how the error behaves as a function of the time step.
		As expected, scheme C4 is more accurate when the wave number is low  ($\sqrt{k_x^2+k_y^2}=2$) and the local truncation errors are relatively small.
		However, we see that for $k_x=k_y=11$ the non-compact scheme is more accurate for $r=\frac{1}{6\sqrt{2}}$
		while C4 is more accurate for $r=\frac{5}{6\sqrt{2}}$ when the time step increases and as a result the temporal error is larger.
		
		\begin{figure}[ht!]
			\begin{subfigure}[ht]{0.4\textwidth}
				\centering\includegraphics[scale=0.5]{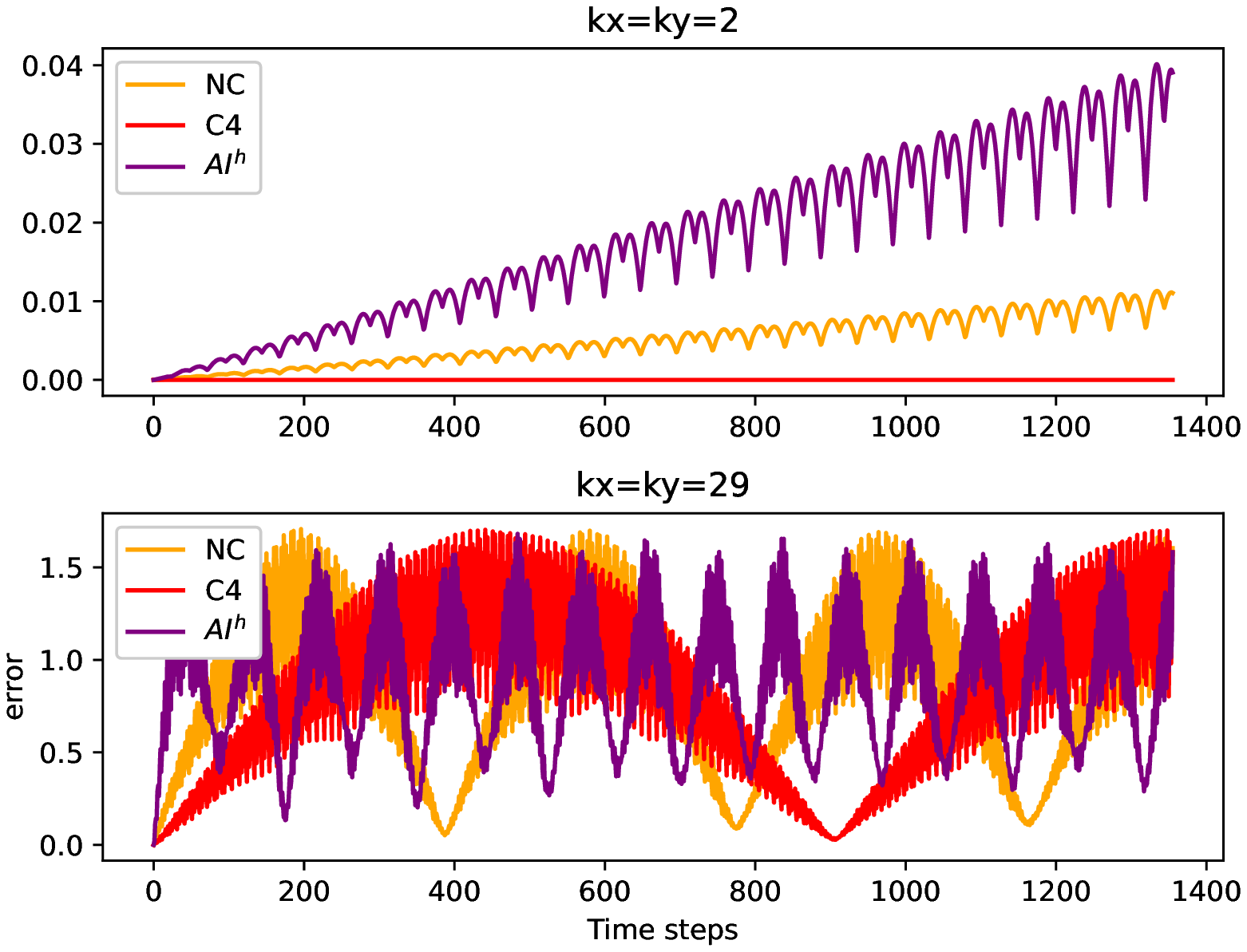}
			\end{subfigure}
			\begin{subfigure}[ht]{0.4\textwidth}
				\centering\includegraphics[scale=0.5]{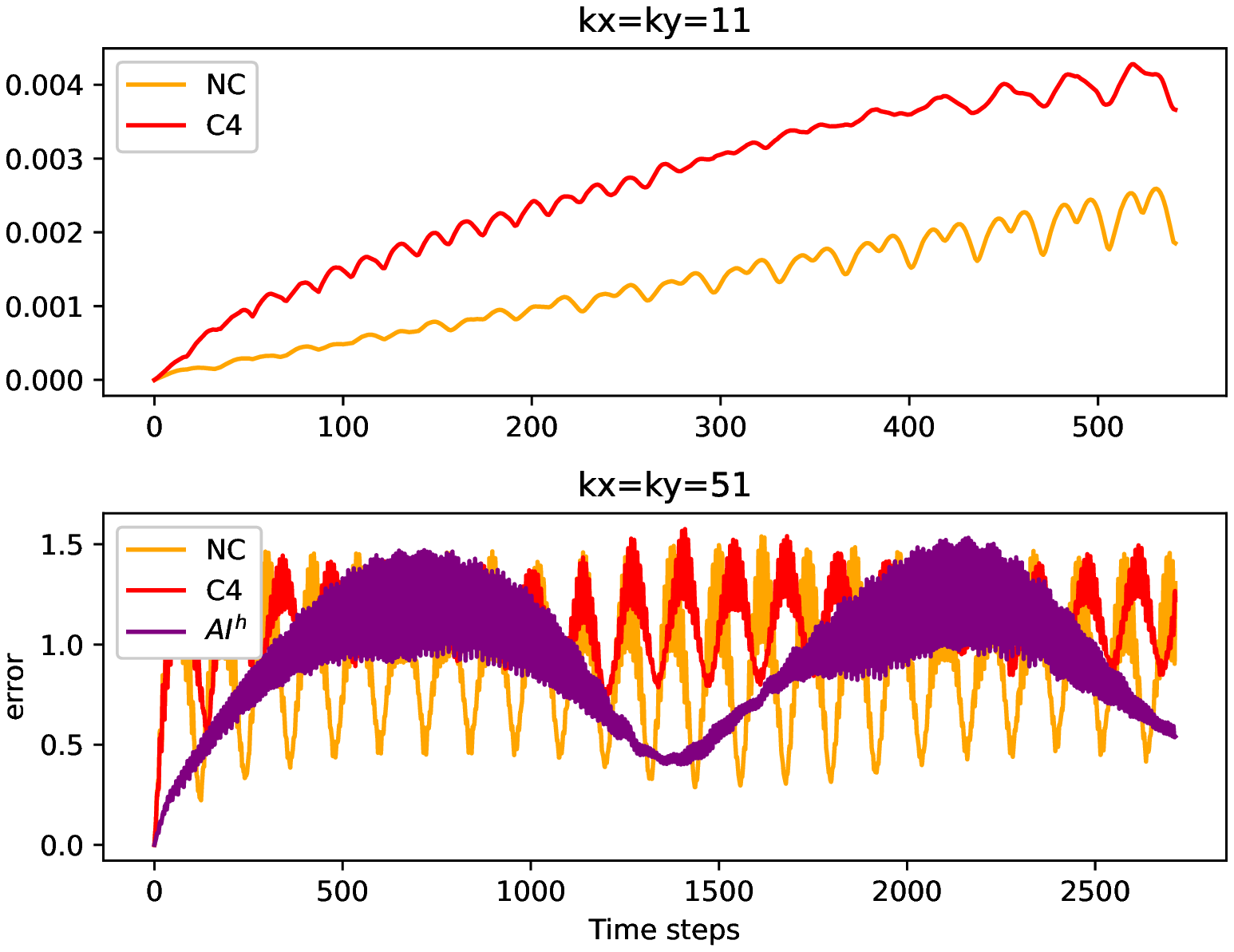}
			\end{subfigure}
			\caption{ $ h=\frac{1}{64}, r=\frac{1}{6\sqrt{2}}$, $Z=1$.
			}
			\label{fig:6}
		\end{figure}
		
		\begin{figure}[ht!]
			\begin{subfigure}[h]{0.4\textwidth}
				\centering\includegraphics[scale=0.5]{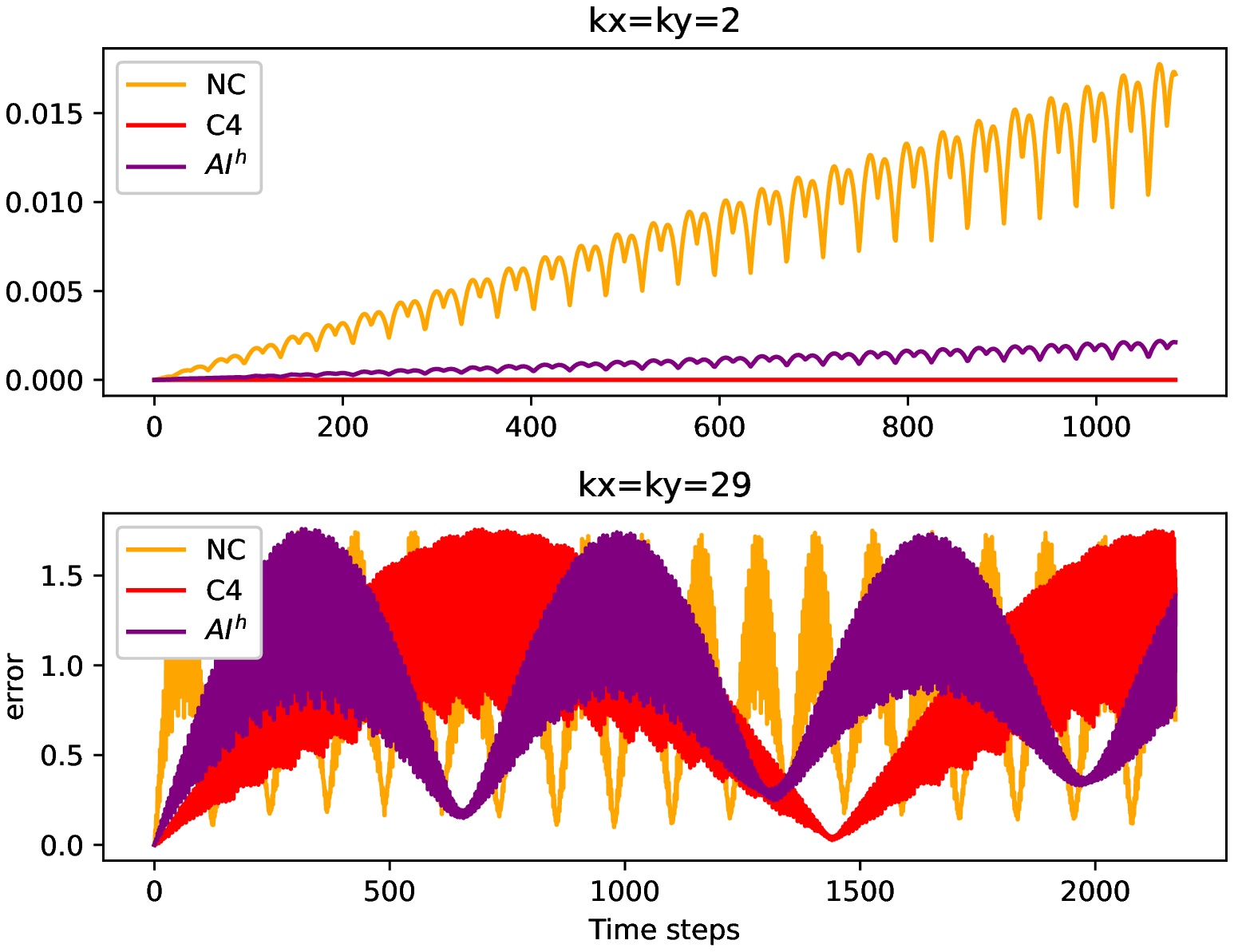}
			\end{subfigure}
			\begin{subfigure}[h]{0.4\textwidth}
				\centering\includegraphics[scale=0.5]{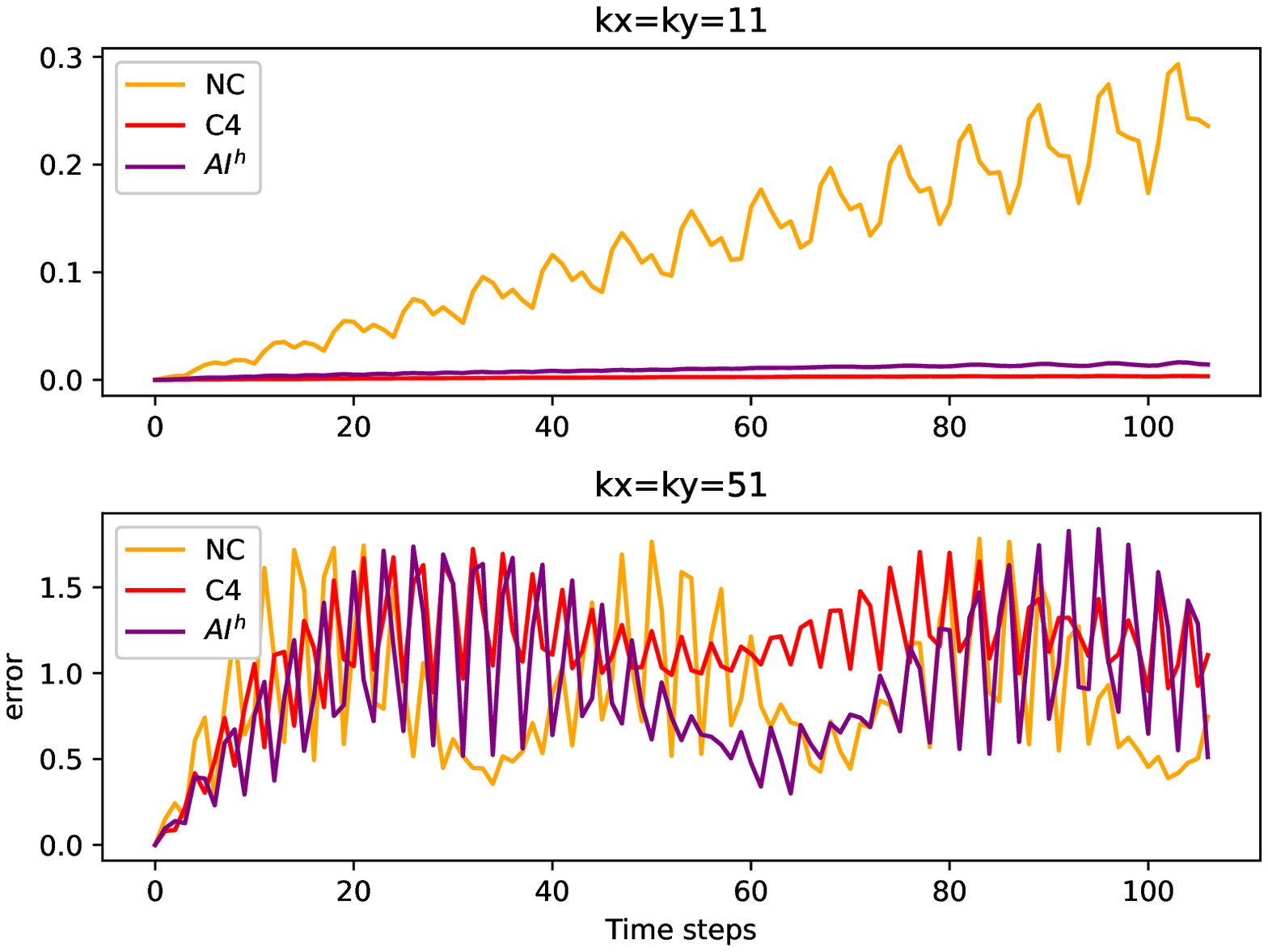}
			\end{subfigure}
			\caption{$h=\frac{1}{64}, r=\frac{5}{6\sqrt{2}}$, $Z=1$.}
			\label{fig:7}
		\end{figure}

		Moreover, we see that for $r=\frac{1}{6\sqrt{2}}$, the scheme NC is more accurate than AI while the situation is the opposite for $r=\frac{5}{6\sqrt{2}}$ .
		This is not a surprise since the dispersion effects may grow as the time step increases \cite{blinne}.

		For higher wave numbers ($\sqrt{k_x^2+k_y^2}=29$ and $\sqrt{k_x^2+k_y^2}=51$), the fourth-order accuracy of C4 and the spatial fourth-order accuracy of NC
		lose their advantage due to the pollution effect, and as the wave number increases the AI scheme demonstrates a similar
		accuracy even though it is only second-order accurate in both time and space.

		\section{Conclusions}
		We have constructed a compact implicit scheme for the 3D Maxwell's equations. The scheme is fourth-order accurate in both space and time and can therefore also be useful for long-time simulations, including scenarios where the PPW ratio is not very high.
		The scheme is compact and maintains its accuracy near the boundaries using equation-based approximations. 
		The temporal grid is staggered, which allows us
		to solve 3 scalar uncoupled elliptic equations at each half-time step. That can be done in parallel with the help of the conjugate gradient method. The elliptic equations are strictly positive definite, which enables  rapid convergence of the iterative scheme. With fixed CFL number, approximately three iterations of conjugate gradient are required for convergence regardless the grid size. This  is likely explained by the fact that the amount of positivity in equations (\ref{eq:mh}) increases as the grid length decreases because the quantity $\frac{24}{h_\tau^2}$ becomes larger as $h$ decrease and the CFL number remains fixed. Overall, the method is efficient in both CPU and memory requirements. Although we have tested numerically only 2D examples, the implementation in 3D is straightforward.
		
		As the method we have presented is finite-difference, the pollution effect cannot be avoided completely. For higher wavenumbers, lower-order methods obtained by minimizing a certain loss function over the stencil parameters (such as data-driven) can outperform higher-order schemes.
			

		\appendix
		\section{}\label{appendixa}
		
		\subsection{Operator details}\label{subsec:op_details}
		Let $\otimes$ denote the Kronecker product of matrices  
		and let $I,J,K$ be index sets.
		Let
		\begin{equation}
			D_1=
			\frac{1}{h}
			\begin{pmatrix}
				-1     & 1     & 0      & 0     & 0      & \dots  & 0      \\
				0      & -1    & 1      & 0      & 0      & \dots  & 0      \\
				0      & 0      & -1     & 1      & 0      & \dots  & 0      \\
				\vdots & \ddots & \ddots & \ddots & \ddots & \ddots & \vdots \\
				0      & 0      & \dots  & 0      & -1      & 1     & 0     \\
				0      & 0      & \dots  & 0     & 0      & -1     & 1
			\end{pmatrix}
		\end{equation}
		denote the standard central finite-difference matrix with grid size $h/2$ whose dimension equals $(m-1)\times m$, where $m\in \{|I|, |J|,|K|\}$.
		We define the following  operators  from $I\times J\times K$ to $I\times J\times K$.
		\begin{align*}
			&
			D_x:=D_1\otimes \mathbf{1}_J\otimes \mathbf{1}_K,\\&
			D_y:= \mathbf{1}_I\otimes D_1\otimes \mathbf{1}_K, \\&
			D_z:= \mathbf{1}_I\otimes \mathbf{1}_J\otimes D_1.
		\end{align*}
		
		Next, we consider a fourth-order accurate approximation for first derivatives:
		Assume that $f(x)$ is known at $N$ points $x_0,x_1, ..,x_{N-1}$. Then, one estimates $f'(x+h/2)$ at the $N-1$ points 	$x_{\frac{1}{2}},..,x_{N-3/2}$ to fourth order as:
		\begin{equation}\label{eq:lhs}
			\underbrace{
				\frac{1}{24}
				\begin{pmatrix}
					26     & -5     & 4      & -1     & 0      & \dots  & 0      \\
					1      & 22     & 1      & 0      & 0      & \dots  & 0      \\
					0      & 1      & 22     & 1      & 0      & \dots  & 0      \\
					\vdots & \ddots & \ddots & \ddots & \ddots & \ddots & \vdots \\
					0      & 0      & \dots  & 1      & 22     & 1      & 0      \\
					0      & 0      & \dots  & 0      & 1      & 22     & 1      \\
					0      & 0      & \dots  & -1     & 4      & -5     & 26
			\end{pmatrix}}_{A}
			\begin{pmatrix}
				f'(x_{\frac{1}{2}}) \\
				f'(x_{3/2}) \\
				\cdots \\
				\cdots\\
				\cdots \\
				\cdots \\
				f'(x_{N-3/2})
			\end{pmatrix}=
			\frac{1}{h}
			\begin{pmatrix}
				f(x_{1})-f(x_0)  \\
				\cdots \\
				\cdots\\
				\cdots \\
				\cdots \\
				\cdots \\
				f(x_{N-1)}-f(x_{N-2})
			\end{pmatrix}+O(h^4).
		\end{equation}
		%
		
		We define the following operators from $I\times J\times K$ to $I\times J\times K$.
		\begin{align*}
			&
			a_x^{-1}:=A^{-1}\otimes \mathbf{1}_J\otimes \mathbf{1}_K,\\&
			a_y^{-1}:= \mathbf{1}_I\otimes A^{-1}\otimes \mathbf{1}_K, \\&
			a_z^{-1}:= \mathbf{1}_I\otimes \mathbf{1}_J\otimes A^{-1},
		\end{align*}
		and
		\begin{align*}
			&
			\delta_x:=a_x^{-1}D_x,\\&
			\delta_y:=a_y^{-1}D_y, \\&
			\delta_z:=a_z^{-1}D_z.
		\end{align*}
		Finally, we define a fourth-order approximation of the curl operator as a matrix that operates on the vectorized tensor of dimension $|I|\cdot  |J| \cdot  |K|\times 1$ :
		$$
		\curl_h:=
		\begin{pmatrix}
			0& -\delta_z & \delta _y\\
			\delta_z&0&-\delta_x\\
			-\delta_y&\delta_x&0\\
		\end{pmatrix}.
		$$
		\begin{remark}\label{rem:curl estimates}
			By \cite[section 11]{GKO},
			$\|\curl_h\|\leq \frac{2\|A^{-1}\|\sqrt{3}}{h}$ and by \cite{yefet_turkel},
			$\|A^{-1}\|\sim \frac{6}{5}$.
		\end{remark}
		
		We recall the standard second-order finite difference matrices for the second derivative.
		We define the operators $D_{xx},D_{yy},D_{zz}$ from $I\times J\times K$ to
		$I\times J\times K$ using the square matrix
		$$
		D_2=
		\frac{1}{h^2}
		\begin{pmatrix}
			-2     & 1     & 0      & 0     & 0      & \dots  & 0      \\
			1     & -2     & 1      & 0      & 0      & \dots  & 0      \\
			0      & 1      & -2     & 1      & 0      & \dots  & 0      \\
			\vdots & \ddots & \ddots & \ddots & \ddots & \ddots & \vdots \\
			0      & 0      & \dots  & 1      & -2     & 1      & 0      \\
			0      & 0      & \dots  & 0      & 1      & -2    & 1      \\
			0      & 0      & \dots  & 0     & 0      & 1     & -2
		\end{pmatrix},
		$$
		\begin{align*}
			&
			D_{xx}:=D_2\otimes \mathbf{1}_J\otimes \mathbf{1}_K,\\&
			D_{yy}:= \mathbf{1}_I\otimes D_2\otimes \mathbf{1}_K, \\&
			D_{zz}:= \mathbf{1}_I\otimes \mathbf{1}_J\otimes D_2.
		\end{align*}
		
		\subsection{Operator estimates}\label{app:operatorestimates}
		Let $r=\frac{h_{\tau}}{h}$ be the  CFL number.
		Consider the finite-difference operators
		$$
		P_1:=-\left (\Delta_h+\frac{h^2}{6}\varUpsilon_h\right)+\frac{24}{h_{\tau}^2}\left (
		1+\frac{2}{r^2}
		\right),
		$$
		and
		$$
		P_2:=\frac{24}{h_{\tau}^2}\left (
		-1-\frac{2}{r^2}-\frac{h^2}{12}\Delta_h
		\right),
		$$
		where
		$$
		\Delta_h\stackrel{\text{def}}{=}D_{xx}+D_{yy}+D_{zz},
		\quad \varUpsilon_h\stackrel{\text{def}}{=} D_{zz}D_{xx}+D_{yy}D_{zz}+D_{xx}D_{yy}.
		$$
		Let $\sigma(\cdot) $ denote the spectrum of a given operator.
		The inclusion
		$$
		\sigma(h^2\Delta_h)\subset[-12,0]
		$$
		implies that
		$$
		\frac{h_{\tau}^2}{24}\sigma(-P_{2})\subset [1+2/r^2-1, 1+2/r^2].
		$$
		The operator $\frac{h_{\tau}^2}{24}\left (  -\Delta_h-\frac{h^2}{6}\varUpsilon_h \right)+1+\frac{2}{r^2}$ operates on a Fourier ansatz $\exp(\sqrt{-1}(i\theta_x+j\theta_y+k\theta_z)$ by
		\begin{align*}
			&
			1+\frac{2}{r^2}+\frac{r^2}{24}\underbrace{\left(
				6-2(\cos \theta_x+\cos \theta_y+\cos \theta_z)
				-\frac{4T}{6}
				\right)}_{S},
		\end{align*}
		where $$T=	(\cos(\theta_z)-1)(\cos(\theta_x)-1)+
		(\cos(\theta_x)-1)(\cos(\theta_y)-1)+
		(\cos(\theta_z)-1)(\cos(\theta_y)-1),$$
		and $-4\leq S\leq 12$.
		Hence,
		\begin{align}
			\begin{split}
				\frac{h_{\tau}^2}{24}\sigma(P_{1})\subset [1+2/r^2-r^2/6, 1+2/r^2+r^2/2] ,
				\\
				\frac{h_{\tau}^2}{24}	\sigma(- P_2)\in [1+\frac{2}{r^2}-1, 1+\frac{2}{r^2}].
			\end{split}
		\end{align}
		As a result, $r<\sqrt{3+\sqrt{21}}$ implies that
		$-P_2, P_1$ are positive definite and
		$$
		0<\frac{2}{r^2}\leq \|\frac{h_{\tau}^2}{24}P_{2}\|\leq 1+\frac{2}{r^2},
		$$
		and
		$$
		0<1+\frac{2}{r^2}-\frac{r^2}{6}\leq\|\frac{h_{\tau}^2}{24}P_1\|\leq 1+\frac{2}{r^2}+\frac{r^2}{2}.
		$$
		
		\subsection{The data-driven scheme} \label{subsec:AI}
		We follow the approach of \cite{ovadia}.
		The general framework for a data-driven scheme can be described as follows.
		Let $\{(X^n,Y^n)\}_{n}$ be $n$-tuples of data-points such that for any $n$,
		$X^n\in \R^m$ and $Y^n\in \R^l$.
		We define a network $\mathcal{N}_{\vec{a}}$ as a function from $\R^m\to \R^l$ which depend on parameters $\vec{a}$.
		We define a loss function of the form
		$\mathcal{L}_{\vec{a}}:=\sum_n |\mathcal{N}_{\vec{a}}(X^n)-Y^n|$
		where $|\cdot|$ is a given norm between two vectors in $\R^l$.
		We strive to solve the problem
		$$
		\underset{\vec{a}}{\mathrm{argmin}} \ \mathcal{L}_{\vec{a}}
		$$
		and then obtain the optimal parameters $\vec{a}$ and the corresponding optimal network $\mathcal{N}_{\vec{a}}$.
		The minimization process can be done using several variations of the  gradient descent  method.

		In our framework, we wish to find the optimal parameters $a,b,d$
		for evaluation of the first derivative using the stencil
		$$
		K_2=
		\frac{1}{\Delta x}
		\begin{pmatrix}
			-b & -a & a & b \\
			-d & -1+3d+2a+6b& 1-3d-2a-6b &d \\
			-b & -a & a & b
		\end{pmatrix}
		$$
		in Yee updating rules (see subsection \ref{subsec:comparble}).
		The network will be a function that takes a solution $\e^n, \h^{n+1/2}$  and returns $\e^{n+1},\h^{n+3/2}$ using Yee updating rules.
		We generate a  data set of analytical solutions using \eqref{eq:data} as follows.
		We fix $h=\frac{1}{16}$, CFL number $r$ and the corresponding $h_{\tau}$. We also fix the final time $T$. The number of spatial grid points is then denoted by $N$ and the number of times steps is denoted by $N_{\tau}$.
		For any $(k_x,k_y)$ in $\{12,13,14,15\}^2$ we generate   analytical solutions $E_z,H_x,H_y$ defined by \eqref{eq:data}. For any   $0\leq n \leq N_{\tau}$ we let
		${E_z}^n_{k_x,k_y,\mathrm{true}},{H_x}^n_{k_x,k_y\mathrm{true}},{H_y}^n_{k_x,k_y,\mathrm{true}}$ be the corresponding values of the analytical solutions evaluated at the time  $t=nh_{\tau}$.
		We let  $$X^{n,k_x,k_y}:=({E_z}^n_{k_x,k_y,\mathrm{true}},{H_x}^n_{k_x,k_y\mathrm{true}},{H_y}^n_{k_x,k_y,\mathrm{true}}).$$
		
		Next, we build the network, $\mathcal{N}_{a,b,d}$, which takes a numerical solution at time step $n$,  and outputs the solution at time $n+1$ evaluated using Yee updating rules with the stencil $K_2$.
		
		A single data point in our data set  is then defined by $(\mathrm{input}^{n,k_x.k_y}, \mathrm{output}^{n,k_x,k_y})$
		where
		$$
		\mathrm{input}^{n,k_x,k_y}=X^{n,k_x,k_y}
		$$
		and
		$$
		\mathrm{output}^{n,k_x,k_y}=(X^{n+1,k_x,k_y}, X^{n+2,k_x,k_y}, X^{n+3,k_x,k_y}).
		$$
		We omit the superscripts $k_x,k_y$ from now on.
		
		The loss function is then defined over all data points as follows:
		\begin{equation}\label{eq:er}
			\mathcal{L}_{a,b,d}=\sum_n|X^{n+1}-\mathcal{N}(X^n)|+|X^{n+2}-\mathcal{N}(\mathcal{N}(X^n))|+
			|X^{n+3}-\mathcal{N}(\mathcal{N}(\mathcal{N}(X^n)))|
		\end{equation}
		where $ |\cdot | $ in \eqref{eq:er} is taken to be the mean absolute error between two vectors.
		The loss function is minimized using Adam optimizer and Keras \cite{keras} over the data points with the usual splitting routine of train, test, and validation sets.

	\end{document}